\theoremstyle{plain}
\newtheorem{theorem}{Theorem}[section]
\newtheorem*{theorem*}{Theorem}
\newtheorem{prop}[theorem]{Proposition}
\newtheorem{lemma}[theorem]{Lemma}
\newtheorem{cor}[theorem]{Corollary}
\theoremstyle{definition}
\newtheorem*{definition*}{Definition}
\numberwithin{equation}{section}
\newcommand{\D}{\mathbb D}
\newcommand{\A}{\alpha}
\newcommand{\DI}{\mathcal{D}}
\newcommand{\DA}{\DI_{\A}}
\newcommand{\Ph}{\varphi}
\newcommand{\CO}{C_{\Ph}}
\newcommand{\CP}{C_{\psi}}
\newcommand{\XCN}{X_0\cap X_1}
\newcommand{\XSN}{X_0+X_1}
\newcommand{\BY}{\mathcal{B}(\mathcal{Y})}
\newcommand{\NTO}{\|T_1\|_{\mathcal{B}(X_1)}}
\newcommand{\NTZ}{\|T_0\|_{\mathcal{B}(X_0)}}
\newcommand{\NTT}{\|T_t\|_{\mathcal{B}(X_t)}}
\title[Compact Differences of Composition Operator]{Compact Differences of Composition Operators\\ on Weighted Dirichlet Spaces}
\author{Robert F.~Allen\textsuperscript{1}, Katherine C.~Heller\textsuperscript{2}, and Matthew A.~Pons\textsuperscript{2}}
\address{\textsuperscript{1}Department of Mathematics and Statistics, University of Wisconsin-La Crosse}
\address{\textsuperscript{2}Department of Mathematics, North Central College}
\email{rallen@@uwlax.edu, kheller@noctrl.edu, mapson@noctrl.edu}
\keywords{Composition operator; Compact difference; Weighted Dirichlet space; Complex interpolation.}
\subjclass[2010]{primary: 47B33; secondary: 46E20, 47B32}
\begin{document}

\begin{abstract}
Here we consider when the difference of two composition operators is compact on the weighted Dirichlet spaces $\DA$.  Specifically we study differences of composition operators on the Dirichlet space $\mathcal{D}$ and $S^2$, the space of analytic functions whose first derivative is in $H^2$, and then use Calder\'{o}n's complex interpolation to extend the results to the general weighted Dirichlet spaces. As a corollary we consider composition operators induced by linear fractional self-maps of the disk.
\end{abstract}

\maketitle

\section{Introduction}

For an analytic self-map $\Ph$ of the unit disk $\D$ and a Banach space $\mathcal{Y}$ of functions analytic on the unit disk, we define the composition operator $\CO$ with symbol $\Ph$ by the rule $\CO f=f\circ \Ph$ for all $f\in\mathcal{Y}$.  The study of these types of operators began formally with Nordgren's paper \cite{EN} where he explored properties of composition operators acting on the Hardy Hilbert space $H^2$. Over the past fifty years, the study has proved to be a lively source of inquiry, most likely due to the fact that the study of such operators lies at the intersection of complex function theory and operator theory.  With this perspective, the goal of such an investigation seeks to relate the operator properties of $\CO$ to the analytic and geometric properties of the symbol function $\Ph$.  In this note we will focus on the property of compactness.

Recall that an operator $T$ acting on a Banach space $\mathcal{Y}$ is compact if it takes the unit ball in $\mathcal{Y}$ (which is not compact in the infinite dimensional setting) into a set with compact closure. For composition operators, compactness is generally classified by how the symbol function behaves near the boundary of the disk.  For instance, it is well known that a composition operator $\CO$ is compact on $H^{\infty}(\D)$ if and only if $\|\Ph\|_{\infty}<1$.  This condition is sufficient on many other spaces, but is often not necessary, meaning that the symbol can have some contact with the boundary and still induce a compact composition operator.  This phenomenon has been studied in depth on the Hardy and weighted Bergman spaces;  in \cite{MCSH} the authors supply the intuitive message for the case of the Hardy spaces: ``$\CO$ will be compact on $H^p$ if and only if $\Ph$ squeezes the unit disc rather sharply into itself".  They then make sense of this intuitive notion using the finite angular derivative of the symbol $\Ph$.

Shapiro and Taylor were the first to observe the role that the angular derivative plays in the study of the compactness problem.  In \cite{JSPT} they showed that the symbol of a compact composition operator $\CO$ on $H^2$ cannot have finite angular derivative at any point of $\partial\D$.  This result was extended to the Bergman space $A^2$ by Boyd in \cite{DB}.  In \cite{MCSH}, the authors use Carleson measure techniques to show that nonexistence of the angular derivative of $\Ph$ is also a sufficient condition for $\CO$ to be compact on $A^2$, however this is not the case on $H^2$ and the authors provide an example demonstrating such a $\Ph$.  Shapiro later characterized the compact composition operators on $H^2$ in terms of the Nevanlinna counting function and the essential norm of the operator in \cite{JS}.

In contrast with this, for $\CO$ acting on the space $S^2$, the situation is much simpler.  First, if $\CO$ is bounded on $S^2$, then $\Ph$ must have finite angular derivative at any point in $\partial\D$ which is mapped to $\partial\D$; see \cite{CMC} Theorem 4.13.  For compactness, it turns out that $\CO$ is compact on $S^2$ if and only if $\|\Ph\|_{\infty}<1$; see, for example, \cite{JS1}. Thus we see a drastic (and interesting) shift in behavior among spaces that are closely related to each other.

Here we are interested in determining when the difference of two composition operators is compact. In \cite{BM} MacCluer investigated this on the Hardy space to understand the topological structure of the collection of compact composition operators within the set of all composition operators. More recently, Moorhouse considered this on a broader range of spaces in \cite{JMoore} and further considered the role of the second order data of the symbol $\Ph$; Bourdon also considered this same question on the Hardy space in \cite{PB}.  Here we aim to extend some of those results.

In the next section we gather the necessary prerequisites.  In Section 3 we work primarily on the Dirichlet space and $S^2$.  Our techniques mimic those of MacCluer and Moorhouse, but required a change in perspective due to the behavior of the reproducing kernels in our spaces of interest.  To overcome this, we instead use the kernels for evaluation of the first derivative.  Finally, we appeal to Calder\'{o}n's method of complex interpolation to provide an extension to the general weighted Dirichlet spaces.  This work is, in part, an invitation for other researchers to employ these newer techniques to the study of composition operators.

\section{Preliminaries}

\subsection{Spaces of analytic functions}

We let $\D$ denote the open unit disk in the complex plane, $\D=\left\{z\in\mathbb{C}:|z|<1\right\}$, and let $H(\D)$ be the space of functions analytic on $\D$. The following classical spaces of analytic functions have received much attention in the study of composition operators. The Hardy space is defined by $$H^2(\D)=\left\{f \textup{ in }H(\D):\|f\|_{H^2}^2=\lim_{r\rightarrow1^-}\int_0^{2\pi}|f(re^{i\theta})|^2\,\frac{d\theta}{2\pi}<\infty\right\}$$ where $d\theta$ is the Lebesgue arc-length measure on the unit circle. For $\beta>-1$, the standard weighted Bergman space is defined by $$A_{\beta}^2(\D)=\left\{f \textup{ in }H(\D) :\|f\|_{A_{\beta}^2}^2=\int_{\D}|f(z)|^2(1-|z|^2)^{\beta}\,dA<\infty\right\}$$ where $dA$ is the Lebesgue area measure normalized so that $A(\D)=1$; the Dirichlet space is given by $$\DI(\D)=\left\{f \textup{ in }H(\D) :\|f\|_{\DI}^2=|f(0)|^2+\int_{\D}|f'(z)|^2\,dA<\infty\right\}.$$

Recall that a reproducing kernel Hilbert space $\mathcal{H}$ with inner product $\langle\cdot,\cdot\rangle_{\mathcal{H}}$ has the property that for each $w\in\D$, there is a unique function $K_w\in \mathcal{H}$ such that $$f(w)=\langle f, K_w\rangle_{\mathcal{H}}.$$  For the Hardy and weighted Bergman spaces, the kernels have a similar form: $$K_w(z)=\frac{1}{1-\overline{w}z}$$ on $H^2$ and $$K_w(z)=\frac{1}{(1-\overline{w}z)^{\beta+2}}$$ on $A_{\beta}^2$ with $\beta>-1$.  On the Dirichlet space the kernel takes on a more complicated form, $$K_w(z)=1+\log\frac{1}{1-\overline{w}z},$$ where $\log z$ denotes the principal branch of the logarithm.

Though often convenient from the computational point of view, presenting the norms for these spaces in terms of integrals obscures the relationship between the spaces, though it is somewhat revealed in the representations of the reproducing kernels. To make the relationship more explicit, we can consider the spaces with a series norm, (equal to the norm given above for the Hardy and Dirichlet spaces, but only equivalent to the Bergman norm): $$H^2(\D)=\left\{f(z)=\sum_{n=0}^{\infty}a_nz^n \textup{ in }H(\D) :\sum_{n=0}^{\infty}|a_n|^2<\infty\right\};$$
$$A_{\beta}^2(\D)=\left\{f(z)=\sum_{n=0}^{\infty}a_nz^n \textup{ in }H(\D) :|a_0|^2+\sum_{n=1}^{\infty}\frac{|a_n|^2}{n^{\beta+1}}<\infty\right\};$$
$$\DI(\D)=\left\{f(z)=\sum_{n=0}^{\infty}a_nz^n \textup{ in }H(\D) :|a_0|^2+\sum_{n=1}^{\infty}n|a_n|^2<\infty\right\}.$$  With these characterizations, we see the obvious containment relationship $\DI\subset H^2\subset A^2,$ but more importantly it is apparent that there are other spaces that deserve consideration. One particular space that has received more attention as of late is $S^2$ which can be defined with a series norm or an equal integral norm, $$\begin{aligned}S^2(\D)&=\left\{f \textup{ in }H(\D) :\|f\|_{S^2}^2=|f(0)|^2+\|f'\|_{H^2}^2<\infty\right\}\\
&=\left\{f(z)=\sum_{n=0}^{\infty}a_nz^n \textup{ in }H(\D) :\|f\|_{S^2}^2=|a_0|^2+\sum_{n=1}^{\infty}n^2|a_n|^2<\infty\right\}.\end{aligned}$$ While this space is also a reproducing kernel Hilbert space, one of the first difficulties encountered in this setting is that there is not a ``nice'' closed form for the reproducing kernel functions with respect to this norm. The reason for this is the fact that, on $S^2$, the kernel for evaluation at $w$ takes the form $$K_w(z)=1+\sum_{n=1}^{\infty}\frac{(\overline{w}z)^n}{n^2},$$ however we cannot identify this sum as an elementary function; for more on this, see \cite{KH}.  We will discuss how to overcome this obstacle shortly.

In general, for $\A\geq-1$ we define the weighted Dirichlet space $$\DA(\D)=\left\{f(z)=\sum_{n=0}^{\infty}a_nz^n \textup{ in }H(\D) :\|f\|_{\DA}^2=|a_0|^2+\sum_{n=1}^{\infty}n^{1-\A}|a_n|^2<\infty\right\}.$$  These are all Hilbert spaces and we see that $H^2=\DI_1$ with equal norm; for $\beta>-1$, the standard weighted Bergman space $A_{\beta}^2=\DI_{\beta+2}$ with an equivalent norm. Also, $\DI=\DI_0$ and $S^2=D_{-1}$ with equal norm. Moreover, if $-1\leq\A<\beta<\infty$, $\DA\subset\DI_{\beta}$ with continuous inclusion and the analytic polynomials are dense in $\DA$.

As is the case with $S^2=D_{-1}$, there are no nice closed forms for the reproducing kernels for $\DA$ when $-1<\A<0$ which is a drawback since these kernels are quite useful in the study of composition operators.  In particular, if $\mathcal{H}$ is a functional Hilbert space of functions on the disk and $\CO$ is bounded on $\mathcal{H}$, then for $w\in \D$, we have $$\CO^*K_w=K_{\varphi(w)}.$$ To overcome this drawback, in many instances we will consider the linear functional for evaluation of the first derivative at a point in the disk; for a reference see \cite{CMC} Theorem 2.16. If $\mathcal{H}=\DA$ for $\A\geq -1$, these functionals are bounded, and thus the Riesz Representation Theorem (\cite{Conway:func} Theorem I.3.4) guarantees the existence of a function, denoted $K_w^{(1)}$ for $w\in\D$, such that $$f'(w)=\langle f,K_w^{(1)}\rangle_{\mathcal{H}}.$$ As with the point evaluation kernels, these kernels behave predictably under the action of the adjoint of a composition operator and it is easy to see that $$\CO^*K_w^{(1)}=\overline{\Ph'(w)}K_{\Ph(w)}^{(1)}.$$  In particular, we will employ these in the spaces $\DI$ and $S^2$.  On $\DI$, we find that $$K_w^{(1)}(z)=\frac{z}{1-\overline{w}z} \hspace{.2in}\textup{and}\hspace{.2in}\|K_w^{(1)}\|_{\DI}^2=\frac{1}{(1-|w|^2)^2},$$ whereas on $S^2$ we have $$K_w^{(1)}(z)=\frac{1}{\overline{w}}\log\frac{1}{1-\overline{w}z} \hspace{.2in}\textup{and}\hspace{.2in}\|K_w^{(1)}\|_{S^2}^2=\frac{1}{1-|w|^2}.$$

\subsection{Julia Carath\'{e}odory Theorem}
For an analytic self-map $\Ph$ of the unit disk, the angular derivative plays a key role in determining compactness of composition operators on many of the spaces in question.  For $\zeta\in\partial\D$ and $M>1$, a nontangential approach region at $\zeta$ is defined by $$\Gamma(\zeta, M)=\{z\in\D:|z-\zeta|<M(1-|z|)\}$$ and a function $f$ has a nontangential limit at $\zeta$ if $\lim_{z\rightarrow \zeta}f(z)$ exists in each nontangential region $\Gamma(\zeta, M)$.  When a nontangential limit exists, we denote it by $\angle\lim_{z\rightarrow\zeta}f(z).$  Furthermore, if $\Ph$ is a self-map of the disk and $\zeta\in\partial\D$, then $\Ph$ has finite angular derivative at $\zeta$ if there exists $\eta\in\partial\D$ such that $$\Ph'(\zeta):=\angle\lim_{z\rightarrow\zeta}\frac{\eta-\Ph(z)}{\zeta-z}$$ exists as a (finite) complex value. One obvious implication of the existence of a finite angular derivative for $\Ph$ at $\zeta$ is that $\Ph$ has nontangential limit of modulus 1 at $\zeta.$  The Julia-Carath\'{e}odory Theorem provides several other implications; for a reference see \cite{CMC} Theorem 2.44.

\begin{theorem}[Julia-Carath\'{e}odory Theorem]\label{thm:JC}
For an analytic self-map $\Ph$ of $\D$ and $\zeta\in\partial\D$, the following are equivalent:
\begin{enumerate}
\item[(a)] $d(\zeta):=\liminf_{z\rightarrow\zeta}(1-|\Ph(z)|)/(1-|z|)<\infty$, where the limit is taken as $z\rightarrow\zeta$ unrestrictedly in $\D$;

\item[(b)] $\Ph$ has finite angular derivative $\Ph'(\zeta)$ at $\zeta$;

\item[(c)] both $\Ph$ and $\Ph'$ have finite nontangential limits at $\zeta$, with $|\eta|=1$ where $\eta=\lim_{r\rightarrow 1}\Ph(r\zeta).$
\end{enumerate}
Moreover, when these conditions hold, we have $\angle\lim_{z\rightarrow\zeta}\Ph'(z)=\Ph'(\zeta)=\overline{\zeta}\eta d(\zeta)$, i.e. $d(\zeta)=|\Ph'(\zeta)|>0$, and $d(\zeta)=\angle\lim_{z\rightarrow\zeta}(1-|\Ph(z)|)/(1-|z|).$
\end{theorem}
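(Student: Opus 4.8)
The plan is to build everything on the Schwarz--Pick lemma and its boundary refinement, Julia's lemma; all three equivalences and the quantitative ``moreover'' clause will then emerge together. Since composing $\Ph$ on either side with a rotation of $\D$ changes $d(\zeta)$, the angular derivative, and the relevant nontangential limits only by unimodular factors, I would first reduce to the normalized case $\zeta=1$ and, after identifying the boundary limit point, $\eta=1$ as well. The engine of the whole argument is the invariant form of the Schwarz--Pick lemma, namely that for all $z,w\in\D$,
$$\frac{(1-|\Ph(z)|^2)(1-|\Ph(w)|^2)}{|1-\overline{\Ph(w)}\Ph(z)|^2}\geq\frac{(1-|z|^2)(1-|w|^2)}{|1-\overline{w}z|^2}.$$

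Assuming (a), I would choose a sequence $z_n\to\zeta$ realizing the liminf, so that $(1-|\Ph(z_n)|)/(1-|z_n|)\to d:=d(\zeta)$; since $d<\infty$ this forces $|\Ph(z_n)|\to1$, and after passing to a subsequence $\Ph(z_n)\to\eta$ with $|\eta|=1$. Fixing $z$ and letting $w=z_n\to\zeta$ in the displayed inequality, and using $|\eta|=|\zeta|=1$, yields Julia's inequality,
$$\frac{|\eta-\Ph(z)|^2}{1-|\Ph(z)|^2}\leq d\,\frac{|\zeta-z|^2}{1-|z|^2},$$
which is the crux. Because the right-hand factor $|\zeta-z|^2/(1-|z|^2)$ tends to $0$ as $z\to\zeta$ inside any approach region $\Gamma(\zeta,M)$, so does the left-hand side; this forces $\Ph(z)\to\eta$ nontangentially with $1-|\Ph(z)|$ comparable to $1-|z|$. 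In particular $d(\zeta)>0$ and equals the nontangential limit of $(1-|\Ph(z)|)/(1-|z|)$, and we obtain the nontangential limit $\eta$ of modulus one required in (c).

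To upgrade this to existence of the angular derivative and the formula $\Ph'(\zeta)=\overline{\zeta}\eta\, d(\zeta)$, I would transfer to the right half-plane via the Cayley maps $\sigma(z)=(\zeta+z)/(\zeta-z)$ and $\tau(w)=(\eta+w)/(\eta-w)$, so that $\Psi=\tau\circ\Ph\circ\sigma^{-1}$ is a self-map of the right half-plane. Using $\operatorname{Re}\sigma(z)=(1-|z|^2)/|\zeta-z|^2$ and the analogue for $\tau$, Julia's inequality becomes exactly $\operatorname{Re}\Psi(W)\geq d^{-1}\operatorname{Re}W$ for all $W$, and the Herglotz (Nevanlinna) representation of such a function shows that $\Psi(W)/W$ has nontangential limit $d^{-1}$ as $W\to\infty$ within a sector. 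Translating back through the Cayley maps gives $\angle\lim_{z\to\zeta}(\eta-\Ph(z))/(\zeta-z)=\overline{\zeta}\eta\, d$, which is precisely $\Ph'(\zeta)$; differentiating the Herglotz representation (or a Lindel\"of-type argument) then shows $\Ph'$ itself has the same nontangential limit, completing (c), (b), and the moreover clause. The reverse implication (b)$\Rightarrow$(a) is immediate: if the difference quotient converges along the radius to $\zeta$, then $1-|\Ph(r\zeta)|\leq|\eta-\Ph(r\zeta)|=O(1-r)$, so the unrestricted liminf defining $d(\zeta)$ is finite.

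The hard part will be Julia's lemma itself --- specifically the passage to the limit in the Schwarz--Pick inequality, which needs a normal-families/subsequence argument to produce $\eta$ together with care that the limiting inequality holds for each fixed $z$ --- and, more delicately, the step that promotes the nontangential limit of the difference quotient $(\eta-\Ph(z))/(\zeta-z)$ to a genuine nontangential limit of $\Ph'$. This last point is the true content of the Carath\'eodory half of the theorem, and it is where the Herglotz representation (or an equivalent monotonicity argument in the half-plane) carries the real weight.
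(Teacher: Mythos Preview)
The paper does not prove the Julia--Carath\'{e}odory Theorem at all; it simply states the result and refers the reader to \cite{CMC} Theorem~2.44, using it thereafter as a black box. There is therefore nothing in the paper to compare your proof against.

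That said, your outline is the standard classical route (Schwarz--Pick $\Rightarrow$ Julia's inequality $\Rightarrow$ Cayley transfer to the half-plane $\Rightarrow$ Herglotz/Nevanlinna representation), and it is essentially the argument one finds in the reference the paper cites. Your sketch is sound as a high-level plan, including your identification of the delicate step---promoting the nontangential limit of the difference quotient to a nontangential limit of $\Ph'$ itself---as the place where real work is required. One small gap worth flagging: you do not explicitly address the implication (c)$\Rightarrow$(b), though it is the easiest of the three (if $\Ph$ and $\Ph'$ both have nontangential limits at $\zeta$ with $|\eta|=1$, the fundamental theorem of calculus along radial segments gives the angular derivative directly).
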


In characterizing compact differences, we will be interested in maps which have similar behavior on the boundary of $\D$.  If $\Ph$ and $\psi$ are two self-maps of the unit disk both with finite angular derivative at $\zeta\in\partial\D$, then we say that the maps have the same \textit{first order data} at $\zeta$ if $\Ph(\zeta)=\psi(\zeta)$ (as radial limits) and $\Ph'(\zeta)=\psi'(\zeta)$.  If in addition $\Ph$ and $\psi$ are twice differentiable at $\zeta$ (meaning that if we consider $\Ph$ and $\psi$ as functions on $\D\cup\{\zeta\}$, then they are twice continuously differentiable) with $\Ph''(\zeta)=\psi''(\zeta)$, then we say that $\Ph$ and $\psi$ have the same \textit{second order data} at $\zeta$.

\subsection{Linear fractional self-maps of the disk}

Recall that a linear fractional map has the form $\Ph(z)=(az+b)/(cz+d)$; the condition that $ad-bc\neq 0$ is necessary and sufficient for such a $\Ph$ to be a univalent, nonconstant mapping of the Riemann sphere onto itself. Our focus here is on linear fractional self-maps of the disk and we point the reader to  Chapter 0 of \cite{JSComp} for more information.  With this narrowed focus we may assume that $d\neq 0$, in which case we can represent $\Ph$ in the form  $\Ph(z)=(az+b)/(cz+1)$.

It is easy to see that every linear fractional self-map $\Ph$ of $\D$ will induce a bounded composition operator on the weighted Dirichlet spaces under consideration. This is due to the fact that the map $\Ph'$ is continuous, and hence bounded, on $\overline{\D}$.  Of particular interest here is the role of second order data. The following statement seems to be known but we were unable to find a proof in the literature.

\begin{lemma}\label{lem:linearfractional1}
If $\Ph$ and $\psi$ are linear fractional self-maps of $\D$ with the same second order data at a point in $\partial\D$, then $\Ph=\psi$.
\end{lemma}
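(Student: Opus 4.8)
The plan is to normalize both maps and then show that the second order data $(\Ph(\zeta),\Ph'(\zeta),\Ph''(\zeta))$ at the boundary point $\zeta$ pins down the three free coefficients of a linear fractional self-map uniquely. Because $\Ph$ and $\psi$ have finite angular derivatives at $\zeta$, their nontangential limits there have modulus one and are in particular finite, so neither map carries its pole to $\zeta$. As recorded in the preceding paragraph, $d\neq 0$ for any linear fractional self-map, so I may write $\Ph(z)=(az+b)/(cz+1)$; in this normalization the representation is unique (comparing polynomial coefficients after cross-multiplying), and the condition that the pole avoids $\zeta$ is precisely $c\zeta+1\neq 0$. The same holds for $\psi$.

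First I would record the derivatives. A direct computation gives
$$\Ph'(z)=\frac{a-bc}{(cz+1)^2},\qquad \Ph''(z)=\frac{-2c(a-bc)}{(cz+1)^3},$$
where $a-bc\neq 0$ is the nonvanishing determinant. Evaluating at $\zeta$ and using $c\zeta+1\neq 0$, both $\Ph'(\zeta)$ and $\Ph''(\zeta)$ are finite, and $\Ph'(\zeta)\neq 0$, consistent with $|\Ph'(\zeta)|=d(\zeta)>0$ from Theorem~\ref{thm:JC}.

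The key step is to form the quotient
$$\frac{\Ph''(\zeta)}{\Ph'(\zeta)}=\frac{-2c}{c\zeta+1},$$
which depends only on $c$ and $\zeta$. Solving this relation for $c$ expresses $c$ explicitly in terms of $\zeta$ and of $\Ph''(\zeta)/\Ph'(\zeta)$; one checks that the denominator arising in this solution equals $2/(c\zeta+1)$ and hence never vanishes, so $c$ is uniquely determined by the second order data. With $c$ in hand, $\Ph'(\zeta)$ determines $a-bc$ and $\Ph(\zeta)$ determines $a\zeta+b$, yielding the linear system $a-bc=\Ph'(\zeta)(c\zeta+1)^2$ and $a\zeta+b=\Ph(\zeta)(c\zeta+1)$ in the unknowns $a,b$. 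Its coefficient matrix has determinant $1+c\zeta\neq 0$, so $a$ and $b$ are uniquely determined as well.

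Since $\psi$ has the same second order data, the identical computation forces its normalized coefficients to equal those of $\Ph$, whence $\Ph=\psi$. The only genuine obstacle here is the bookkeeping of the non-degeneracy conditions, namely that the pole lies off $\zeta$ (so $c\zeta+1\neq 0$) and that $\Ph'(\zeta)\neq 0$ (so the crucial quotient is defined); both are consequences of the finite angular derivative hypothesis built into the definition of second order data. A coordinate-free alternative is to apply the argument to $\psi^{-1}\circ\Ph$, which is again linear fractional and whose second order data at $\zeta$ matches that of the identity; the same derivative computation then forces it to be the identity map.
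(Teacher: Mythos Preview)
Your argument is correct and follows essentially the same route as the paper's: both isolate the denominator coefficient via the ratio $\Ph''(\zeta)/\Ph'(\zeta)=-2c/(c\zeta+1)$ and then recover $a$ and $b$ linearly from the zeroth- and first-order data. The paper first reduces by rotation to $\zeta=1$ with $\Ph(1)=\psi(1)=1$ and compares coefficients of $\Ph$ and $\psi$ directly, whereas you work at a general $\zeta$ and phrase it as ``the data determines the coefficients uniquely,'' but the computation is the same.
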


\begin{proof}
Assume that $\Ph(z)=(az+b)/(cz+1)$ and $\psi(z)=(\A z+\beta)/(\gamma z+1)$ are nonconstant linear fractional self-maps of $\D$ with the same second order data.  By composing with rotations, we may assume without loss of generality that $\Ph(1)=\psi(1)=1$.  Thus we have the following assumptions:
\begin{enumerate}
\item[(i)] $\displaystyle \Ph(1)=\frac{a+b}{c+1} = \frac{\A+\beta}{\gamma+1}=\psi(1)=1$;\vspace{.1in}
\item[(ii)] $\displaystyle \Ph'(1)= \frac{a-bc}{(c+1)^2} = \frac{\A-\beta\gamma}{(\gamma+1)^2} =\psi'(1)$;\vspace{.1in}
\item[(iii)] $\displaystyle \Ph''(1)= \frac{-2c(a-bc)}{(c+1)^3} = \frac{-2\gamma(\A-\beta\gamma)}{(\gamma+1)^3} =\psi''(1)$.
\end{enumerate}

\noindent First note that $c,\gamma\neq -1$ since each of the above quantities exist as finite complex values.  Now, considering (ii) and (iii), it immediately follows that $$\frac{c}{c+1}=\frac{\gamma}{\gamma+1}$$ and hence $c=\gamma$.  Substituting this into (i), we have $a+b=\A+\beta$ or $a-\A=\beta-b$.  Moreover, this same substitution in (ii) implies that $$a-\A=bc-\beta c =c(b-\beta)$$ or $$\beta- b=c(b-\beta).$$  Thus it must be the case that $b=\beta$, since $c\neq -1$.  It is then immediate that $a=\A$ and hence $\Ph=\psi$.
\end{proof}

\subsection{Calder\'{o}n's complex interpolation}

Let $(X_0, \|\cdot\|_0)$ and $(X_1, \|\cdot\|_1)$ be a compatible pair of Banach spaces in the sense of Calder\'{o}n (see \cite{AC}). Both $X_0$ and $X_1$ may be continuously embedded in the complex topological vector space $\XSN$ when equipped with the norm $$\|x\|_{\XSN}=\inf\left\{\|y\|_0+\|z\|_1:x=y+z, y\in X_0, z\in X_1\right\}.$$ In addition, the space $\XCN$, with norm $$\|x\|_{\XCN}=\max\left\{\|x\|_0,\,\|x\|_1\right\},$$ maps continuously into $X_0$ and $X_1$.  In this note we will further assume that the space $\XCN$ is dense in both $X_0$ and $X_1$ and define the interpolation algebra $\mathcal{I}[X_0,X_1]$ to be the set of
all linear operators $T:\XCN\rightarrow\XCN$ that are both 0-continuous and 1-continuous. The interpolation algebra defined above first appeared in the $L^p$-space setting in \cite{BB}; for properties and applications to the study of spectra, see \cite{BB1}, \cite{HS}, \cite{MP}, and \cite{KS}.

For a Banach space $\mathcal{Y}$, we let $\BY$ denote the set of all bounded operators on $\mathcal{Y}$. By continuity any operator
$T\in\mathcal{I}[X_0,X_1]$ induces a unique operator $T_i\in\mathcal{B}(X_i)$, $i=0,1$. For $t\in(0,1)$, let $X_t=[X_0,X_1]_t$ be the interpolation space obtained via Calder\'{o}n's method of complex interpolation; it follows then that $\XCN$ is dense in $X_t$ and $T$ also induces a unique operator $T_t\in\mathcal{B}(X_t)$ satisfying $$\NTT\leq \NTZ^{1-t}\NTO^t, \hspace{.1in} t\in(0,1).$$

To apply interpolation techniques to our study, we first verify that the weighted Dirichlet spaces can be interpreted as interpolation spaces. One can see this by considering these spaces as weighted $\ell^2$-spaces or by considering the techniques developed in \cite{JMC}; a direct proof of this nature can  be found in \cite{MP}.

\begin{prop}\label{propDAinterp} Suppose $-1<\A<\gamma<\beta<\infty$.  If $t\in(0,1)$ with $\gamma=(1-t)\A+t\beta$, then $[\DA,\DI_{\beta}]_t=\DI_{\gamma}$ with the series norm given above.
\end{prop}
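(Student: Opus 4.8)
The plan is to recognize each weighted Dirichlet space as a weighted sequence space and reduce the assertion to the classical interpolation theorem for such spaces. The coefficient map $J$ sending $f(z)=\sum_{n=0}^{\infty}a_nz^n$ to $(a_n)_{n\geq 0}$ is, by the very definition of the series norm, an isometric isomorphism of $\DA$ onto the weighted space $\ell^2(\mu_{\A})$, where $\mu_{\A}$ assigns mass $1$ to $n=0$ and mass $n^{1-\A}$ to each $n\geq 1$; the same map identifies $\DI_{\beta}$ with $\ell^2(\mu_{\beta})$ and $\DI_{\gamma}$ with $\ell^2(\mu_{\gamma})$. Because complex interpolation is a functor that respects isometric isomorphisms of compatible couples, it suffices to prove $[\ell^2(\mu_{\A}),\ell^2(\mu_{\beta})]_t=\ell^2(\mu_{\gamma})$ with equal norms and then transport the equality back through $J$.

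First I would set up the couple, taking $X_0=\DA$ and $X_1=\DI_{\beta}$. Since $-1<\A<\beta$, the inclusion $\DA\subset\DI_{\beta}$ is continuous (as recorded in Section~2), so one may take $\DA+\DI_{\beta}=\DI_{\beta}$ and $\DA\cap\DI_{\beta}=\DA$; the pair is thus compatible in Calder\'on's sense. The standing density hypothesis also holds: the analytic polynomials belong to $\DA$ and are dense in $\DI_{\beta}$, so $\DA\cap\DI_{\beta}=\DA$ is dense in $\DI_{\beta}$ (and trivially in itself). Transporting these facts through $J$, the analogous statements hold for the sequence couple $(\ell^2(\mu_{\A}),\ell^2(\mu_{\beta}))$.

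Next I would carry out the weight computation. This is the special case $p_0=p_1=2$ of Calder\'on's theorem on interpolation of weighted $L^p$-spaces: for positive weights $\mu_0,\mu_1$ one has $[L^{p_0}(\mu_0),L^{p_1}(\mu_1)]_t=L^{p}(\mu)$ with $1/p=(1-t)/p_0+t/p_1$ and $\mu^{1/p}=\mu_0^{(1-t)/p_0}\mu_1^{t/p_1}$; for $p_0=p_1=2$ this yields $p=2$ and $\mu=\mu_0^{1-t}\mu_1^{t}$. Applying this with $\mu_0=\mu_{\A}$ and $\mu_1=\mu_{\beta}$, the interpolated weight at each $n\geq 1$ is $$\big(n^{1-\A}\big)^{1-t}\big(n^{1-\beta}\big)^{t}=n^{(1-t)(1-\A)+t(1-\beta)}=n^{1-[(1-t)\A+t\beta]}=n^{1-\gamma},$$ while the weight at $n=0$ remains $1^{1-t}1^{t}=1$. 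Thus the interpolated weight is exactly $\mu_{\gamma}$, giving $[\ell^2(\mu_{\A}),\ell^2(\mu_{\beta})]_t=\ell^2(\mu_{\gamma})$ isometrically, and hence $[\DA,\DI_{\beta}]_t=\DI_{\gamma}$ in the series norm.

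The only genuinely delicate point is the weighted $\ell^2$ identity itself, the remainder being bookkeeping through $J$. Rather than cite Calder\'on's general $L^p$ theorem as a black box, one can prove it directly by a diagonal multiplier argument: for $z$ in the closed strip let $M_z$ act on sequences by $(M_zx)_n=n^{[(1-z)(1-\A)+z(1-\beta)]/2}x_n$ for $n\geq 1$ and $(M_zx)_0=x_0$. The real part of the exponent equals $(1-\A)/2$ on the line $\Re z=0$, equals $(1-\beta)/2$ on $\Re z=1$, and equals $(1-\gamma)/2$ at $z=t$, so $M_z$ restricts to isometries $\ell^2(\mu_{\A})\to\ell^2$, $\ell^2(\mu_{\beta})\to\ell^2$, and $\ell^2(\mu_{\gamma})\to\ell^2$ on these three slices respectively. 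Composing an admissible function for the couple $(\ell^2(\mu_{\A}),\ell^2(\mu_{\beta}))$ with $M_z$ then produces an admissible function for $(\ell^2,\ell^2)$ of the same norm, reducing the claim to the trivial identity $[\ell^2,\ell^2]_t=\ell^2$. I expect the real work to lie in verifying that $z\mapsto M_zF(z)$ is genuinely analytic and bounded into $\ell^2$---most transparently first for finitely supported $F$ and then by the density established above---with the function-theoretic input of Section~2 entering only through the compatibility and density already checked.
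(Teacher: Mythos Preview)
Your proposal is correct, and it is precisely the approach the paper itself points to: the paper does not give a self-contained proof of this proposition but instead remarks that ``one can see this by considering these spaces as weighted $\ell^2$-spaces'' and cites \cite{JMC} and \cite{MP} for the details. Your identification of $\DA$ with $\ell^2(\mu_\A)$ via the coefficient map, followed by the Calder\'on weighted-$L^p$ interpolation formula (or the equivalent diagonal-multiplier reduction to $[\ell^2,\ell^2]_t=\ell^2$), is exactly this route, and the weight computation $n^{(1-t)(1-\A)+t(1-\beta)}=n^{1-\gamma}$ is correct.
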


One appealing property of working with composition operators on these spaces in the interpolation setting is the nested behavior mentioned earlier.  Specifically, $\DA\subset \DI_{\beta}$, for $-1<\A<\beta$, with continuous inclusion, so $\DA=\DA\cap \DI_{\beta}$.  Combining this with the fact that the analytic polynomials are dense in each weighted Dirichlet space implies that $\DA\cap \DI_{\beta}$ is dense in $\DA$ and $\DI_{\beta}$.  Moreover, MacCluer and Shapiro showed in \cite{MCSH} that boundedness of $\CO$ on $\DA$ implies boundedness on $\DI_{\beta}$ when $-1<\A<\beta.$ Thus, for our purposes, it suffices to know that $\CO$ is bounded on the single endpoint space $\DA$. Furthermore, the fact that we are defining our operators on a dense subset of each space implies that for $t\in(0,1)$ the interpolated operator satisfies $(\CO)_t =\CO$.

When using interpolated operators, the goal is to determine properties of the operator on an interpolation space $X_t$ based on properties of the operator on the endpoint spaces, or to extrapolate properties from one interpolation space to the other interpolation spaces and/or the endpoint spaces.  The result that we will make use of here is Cwikel's compactness result which extrapolates compactness on one interpolation space to the other interpolation spaces, but not necessarily to the endpoint spaces.

\begin{theorem}[\cite{MC1} Theorem 2.1]\label{thm:Cwikelcompact}
If $T_0$ and $T_1$ are bounded and $T_t$ is compact for some $t\in(0,1)$, then $T_x$ is compact for all $x\in(0,1)$.
\end{theorem}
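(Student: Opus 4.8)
The plan is to deduce the statement from two classical facts about Calder\'{o}n's complex method: the reiteration theorem and the interpolation of compactness when \emph{one} endpoint operator is compact. The subtlety in the hypothesis is that the compact operator $T_t$ sits at an interior parameter rather than at an endpoint; reiteration is precisely what lets us reposition $X_t$ as a new endpoint of a fresh compatible couple, after which the one-endpoint compactness theorem applies directly.

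First I would fix $x\in(0,1)$ with $x\neq t$ and treat the cases $x<t$ and $x>t$ symmetrically. Suppose $x<t$. Calder\'{o}n's reiteration theorem (see \cite{AC}), which is available here because $\XCN$ is dense in both $X_0$ and $X_1$, gives
\[
[X_0,X_t]_{\eta}=X_{\eta t},\qquad \eta\in(0,1),
\]
so choosing $\eta=x/t\in(0,1)$ identifies $X_x$ with the complex interpolation space of the couple $(X_0,X_t)$ at parameter $x/t$. Now $T$ is bounded on the endpoint $X_0$ (since $T_0$ is bounded by hypothesis) and compact on the endpoint $X_t$ (by hypothesis), and $\XCN$ is dense in each; hence the one-endpoint interpolation theorem for compact operators yields that the induced operator on $[X_0,X_t]_{x/t}=X_x$ is compact, i.e.\ $T_x$ is compact. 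For $x>t$ one argues identically with the couple $(X_t,X_1)$: reiteration gives $[X_t,X_1]_{\eta}=X_{t+\eta(1-t)}$, and taking $\eta=(x-t)/(1-t)\in(0,1)$ realizes $X_x$ as an interpolation space whose compact endpoint is $X_t$ and whose bounded endpoint is $X_1$ (here the boundedness of $T_1$ is used). In the concrete setting of the nested weighted Dirichlet spaces the couples $(X_0,X_t)$ and $(X_t,X_1)$ are totally ordered, so the compatibility and density conditions required for reiteration are immediate, and the two boundedness hypotheses together cover all $x\in(0,1)$.

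The substance of the argument, and the step I expect to be the main obstacle, is the one-endpoint compactness theorem itself: that an operator bounded on one endpoint of a compatible couple and compact on the other is compact on every complex interpolation space in between. For the real method this is comparatively soft, but for Calder\'{o}n's complex method it is genuinely delicate, since an element of an interpolation space is a boundary value $f(\eta)$ of an analytic $\XSN$-valued function on the strip, and compactness along one boundary line does not transfer to interior values by any simple averaging; the proof (due to Cwikel) instead requires a quantitative approximation of the compact endpoint operator. I would note one shortcut special to the present situation: the spaces $\DA$ are weighted $\ell^2$-spaces, for which the complex and real interpolation methods coincide, so one may instead invoke the older and more elementary real-method version of the one-endpoint theorem and bypass the complex-analytic machinery entirely.
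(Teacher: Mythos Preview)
The paper does not prove this statement; it is quoted without proof as \cite{MC1} Theorem~2.1 and used purely as a black box, so there is no argument in the paper to compare against. Your reduction is correct: reiteration repositions $X_t$ as an endpoint of the couple $(X_0,X_t)$ (respectively $(X_t,X_1)$), and then Cwikel's one-endpoint compactness theorem---bounded on one end, compact on the other, hence compact on every intermediate space---gives the conclusion. This is in fact how the extrapolation version is derived in Cwikel's paper itself, so you have reconstructed the intended argument, and you correctly flag that the one-endpoint theorem is where all the analytic content lies. Your side remark that the weighted Dirichlet spaces are weighted $\ell^2$-spaces, so that real and complex interpolation coincide and one may fall back on the easier real-method compactness results, is a legitimate shortcut for the applications in this paper, though of course it does not establish the theorem at the stated level of generality.
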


\section{Compact Differences of Composition Operators}

As mentioned previously, our goal is to extend the work of Moorhouse \cite{JMoore}.  There the author characterized when two composition operators have compact difference on the weighted Bergman spaces ($\A>1$ in our scale of weighted Dirichlet spaces) and provided partial results for weighted Dirichlet spaces in the range $0<\A\leq 1$.  Here we extend some of those results to the entire range of weighted Dirichlet spaces.  We first work on the Dirichlet space by modifying techniques used in the Hardy and weighted Bergman space setting.  We then discuss extending this to the space $S^2$. Finally, we discuss the compact difference problem on an arbitrary weighted Dirichlet space where we apply Calder\'{o}n's complex interpolation.

\begin{lemma}\label{lem:Dfod}
Let $\Ph$ and $\psi$ be analytic self-maps of $\D$ such that $\CO$ and $\CP$ are bounded on $\DI$. Further assume that $\Ph$ and $\psi$ have finite angular derivative at some point $\zeta\in\partial\D$.  If $\CO-\CP$ is compact on $\DI$, then $\Ph$ and $\psi$ have the same first order data at $\zeta$.
\end{lemma}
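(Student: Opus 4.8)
The plan is to test the compactness of $\CO-\CP$ against the normalized kernels for evaluation of the first derivative. For $w\in\D$ put $k_w=(1-|w|^2)K_w^{(1)}$, so that $\|k_w\|_{\DI}=1$ by the norm formula recorded above. First I would verify that $k_w\to 0$ weakly as $w\to\zeta$: for every $f\in\DI$ one has $\langle f,k_w\rangle_{\DI}=(1-|w|^2)f'(w)$, which tends to $0$ on the dense set of polynomials and therefore, thanks to the uniform bound $\|k_w\|_{\DI}=1$, for all $f\in\DI$. Writing $T=\CO-\CP$, compactness of $T$ makes $T^{*}$ compact as well, so $T^{*}$ sends the weakly null family $\{k_w\}$ to a norm null family; that is, $\|T^{*}k_w\|_{\DI}\to 0$ as $w\to\zeta$. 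Using $\CO^{*}K_w^{(1)}=\overline{\Ph'(w)}K_{\Ph(w)}^{(1)}$ I would then record that $T^{*}k_w=a_w-b_w$, where $a_w=(1-|w|^2)\overline{\Ph'(w)}K_{\Ph(w)}^{(1)}$ and $b_w=(1-|w|^2)\overline{\psi'(w)}K_{\psi(w)}^{(1)}$.

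Next I would restrict to the radial approach $w=r\zeta$, $r\to 1^{-}$, and bring in the Julia--Carath\'eodory Theorem. Writing $u=\Ph(w)$ and $v=\psi(w)$, the norm formula gives $\|a_w\|_{\DI}=(1-|w|^2)|\Ph'(w)|/(1-|u|^2)$; since $|\Ph'(w)|\to|\Ph'(\zeta)|>0$ and $(1-|u|^2)/(1-|w|^2)\to|\Ph'(\zeta)|$, we obtain $\|a_w\|_{\DI}\to 1$, and likewise $\|b_w\|_{\DI}\to 1$. Because $\|a_w-b_w\|_{\DI}\to 0$ while both norms tend to $1$, expanding $\|a_w-b_w\|_{\DI}^{2}$ forces $\Re\langle a_w,b_w\rangle_{\DI}\to 1$, hence $|\langle a_w,b_w\rangle_{\DI}|\to 1$. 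Computing the inner product from $\langle K_u^{(1)},K_v^{(1)}\rangle_{\DI}=(1-\overline{u}v)^{-2}$ yields the clean identity $|\langle a_w,b_w\rangle_{\DI}|/(\|a_w\|_{\DI}\|b_w\|_{\DI})=(1-|u|^2)(1-|v|^2)/|1-\overline{u}v|^2=1-\rho(u,v)^{2}$, where $\rho$ denotes the pseudohyperbolic distance. Combining the three limits gives $\rho(\Ph(w),\psi(w))\to 0$ as $w\to\zeta$ radially.

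Finally I would extract the first order data from $\rho(\Ph(w),\psi(w))\to 0$. Since $\Ph(w)\to\Ph(\zeta)$ and $\psi(w)\to\psi(\zeta)$, both of modulus $1$, and since distinct unimodular points $\eta_1,\eta_2$ satisfy $|\eta_1-\eta_2|=|1-\overline{\eta_1}\eta_2|$, the assumption $\Ph(\zeta)\neq\psi(\zeta)$ would force $\rho\to 1$, a contradiction; hence $\Ph(\zeta)=\psi(\zeta)=:\eta$. For the derivatives I set $p=1-|\Ph(w)|^{2}$ and $q=1-|\psi(w)|^{2}$ and use the elementary chain $|1-\overline{u}v|^{2}\ge(1-|u||v|)^{2}\ge\tfrac14(p+q)^{2}$, which gives $1-\rho^{2}\le 4pq/(p+q)^{2}=1-((p-q)/(p+q))^{2}\le 1$; thus $\rho\to 0$ forces $p/q\to 1$. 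Dividing numerator and denominator by $1-|w|^{2}$ and invoking Julia--Carath\'eodory once more, $p/q\to|\Ph'(\zeta)|/|\psi'(\zeta)|$, so $|\Ph'(\zeta)|=|\psi'(\zeta)|$. As the theorem also gives $\Ph'(\zeta)=\overline{\zeta}\eta\,|\Ph'(\zeta)|$ and $\psi'(\zeta)=\overline{\zeta}\eta\,|\psi'(\zeta)|$ with the \emph{common} boundary value $\eta$, equality of moduli upgrades to $\Ph'(\zeta)=\psi'(\zeta)$, and $\Ph,\psi$ share the same first order data at $\zeta$. I expect the crux to be this last step: arranging that the single scalar limit $\rho\to 0$ simultaneously pins down the boundary value and the modulus of the angular derivative, and confirming that the radial approach of $w$ drives $\Ph(w),\psi(w)$ to $\eta$ so that the Julia--Carath\'eodory asymptotics legitimately apply.
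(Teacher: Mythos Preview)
Your argument is correct and takes a genuinely different route from the paper. The paper proceeds by contrapositive: assuming the first order data disagree, it bounds the essential norm from below by $1$ via a two-case analysis (either $\Ph(\zeta)\neq\psi(\zeta)$, in which case the cross term $2\textup{Re}\,\langle\CO^{*}K_w^{(1)},\CP^{*}K_w^{(1)}\rangle/\|K_w^{(1)}\|^{2}$ tends to $0$ along a radial sequence, or $\Ph(\zeta)=\psi(\zeta)$ but $\Ph'(\zeta)\neq\psi'(\zeta)$, in which case one must approach $\zeta$ along the boundary of a nontangential region $\gamma_M$ with $M$ large to force the cross term small). You instead argue directly: compactness forces $\|T^{*}k_w\|\to 0$, and since $\|a_w\|,\|b_w\|\to 1$ this yields $|\langle a_w,b_w\rangle|\to 1$; the identity $|\langle a_w,b_w\rangle|/(\|a_w\|\|b_w\|)=1-\rho^{2}$ then gives $\rho(\Ph(w),\psi(w))\to 0$ along the \emph{radial} ray alone, and you extract both pieces of first order data from $\rho\to 0$ by the elementary inequality $((p-q)/(p+q))^{2}\le\rho^{2}$ together with the Julia--Carath\'eodory relation $\Ph'(\zeta)=\overline{\zeta}\eta\,|\Ph'(\zeta)|$. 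Your approach avoids the case split and the $\gamma_M$ device entirely, at the cost of not producing the quantitative essential-norm bound $\|\CO-\CP\|_{e}\geq 1$ that the paper obtains; conversely, your route packages the conclusion through the single scalar quantity $\rho$, which is closer in spirit to the paper's subsequent Theorem~\ref{thm:Dpseudo} than to its proof of the lemma itself.
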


\begin{proof}
First note that it suffices to prove the statement in the case when $\zeta=1$ since the rotations of the disk give rise to unitary operators on $\DI$.  By the Julia-Carath\'{e}odory Theorem it follows that $|\Ph(1)|=|\psi(1)|=1$ and by hypothesis $\Ph'(1)=s<\infty$ and $\psi'(1)=t<\infty$. For the sake of notation we set $\Ph(1)=\eta$.  Now, we will show that if $\Ph$ and $\psi$ do not have the same first order data then $\CO-\CP$ is not compact by showing that the essential norm of $\CO-\CP$ is bounded away from 0; in particular we will show that $$\|\CO-\CP\|_e^2\geq1.$$  To this end, we first obtain lower estimates on the norm of $(\CO-\CP)^*$ acting on the normalized reproducing kernels for evaluation of the first derivative; for $w\in\D$ we have $$\left\|(\CO-\CP)^*\left(\frac{K_w^{(1)}}{\|K_w^{(1)}\|}\right)\right\|^2=\frac{\|\CO^*K_w^{(1)}-\CP^*K_w^{(1)}\|^2}{\|K_w^{(1)}\|^2}$$ and
$$\begin{aligned}\frac{\|\CO^*K_w^{(1)}-\CP^*K_w^{(1)}\|^2}{\|K_w^{(1)}\|^2}=\frac{\|\CO^*K_w^{(1)}\|^2}{\|K_w^{(1)}\|^2}+
\frac{\|\CP^*K_w^{(1)}\|^2}{\|K_w^{(1)}\|^2}-2\textup{Re}\left\langle\frac{\CO^*K_w^{(1)}}{\|K_w^{(1)}\|},\frac{\CP^*K_w^{(1)}}{\|K_w^{(1)}\|}\right\rangle.
\end{aligned}$$ Considering the action of the adjoint and the formulas for these kernels and their norms in $\DI$, we see that previous line is equal to
$$\frac{|\Ph'(w)|^2(1-|w|^2)^2}{(1-|\Ph(w)|^2)^2}+\frac{|\psi'(w)|^2(1-|w|^2)^2}{(1-|\psi(w)|^2)^2}-
2\textup{Re}\left(\frac{\overline{\Ph'(w)}\psi'(w)(1-|w|^2)^2}{(1-\overline{\Ph(w)}\psi(w))^2}\right).$$  By the Julia-Carath\'{e}odory Theorem $$\angle\lim_{w\rightarrow 1}\frac{1-|w|^2}{1-|\Ph(w)|^2}=\frac1s\hspace{.2in}\textup{and}\hspace{.2in}\angle\lim_{w\rightarrow 1}\Ph'(w)=\Ph'(1)=s,$$ from which it follows that $$\angle\lim_{w\rightarrow1}\frac{|\Ph'(w)|^2(1-|w|^2)^2}{(1-|\Ph(w)|^2)^2}=1.$$

Next we consider two cases.  If $\psi(1)\neq \eta$  (as a radial limit), then we can find a sequence $\{r_n\}$ increasing to 1 such that $\lim_{n\rightarrow\infty}\psi(r_n)\neq \eta$. Then $\lim_{n\rightarrow\infty}(1-\overline{\Ph(r_n)}\psi(r_n))\neq 0$ and the fact that $s,t<\infty$ guarantee us that $$\lim_{n\rightarrow\infty}2\textup{Re}\left(\frac{\overline{\Ph'(r_n)}\psi'(r_n)(1-|r_n|^2)^2}{(1-\overline{\Ph(r_n)}\psi(r_n))^2}\right)=0.$$  This shows that in this case \begin{equation}\label{eqn:adjointkernel}\left\|(\CO-\CP)^*\left(\frac{K_w^{(1)}}{\|K_w^{(1)}\|}\right)\right\|\geq 1.\end{equation}

For the second case suppose that $\psi(1)=\eta$ (as a radial limit) but $s\neq t$. Observe that $$\frac{(1-\overline{\Ph(w)}\psi(w))^2}{\overline{\Ph'(w)}\psi'(w)(1-|w|^2)^2}=\frac1{\overline{\Ph'(w)}\psi'(w)}\left(\frac{1-\overline{\Ph(w)}\psi(w)}
{1-|w|^2}\right)^2$$ which equals $$\frac1{\overline{\Ph'(w)}\psi'(w)}\left(\frac{1-|\Ph(w)|^2}{1-|w|^2}+
\frac{\overline{\Ph(w)}(1-w)}{1-|w|^2}\left(\frac{\eta-\psi(w)}{1-w}-\frac{\eta-\Ph(w)}{1-w}\right)\right)^2.$$  For $M>1$, consider the boundary of a nontangential approach region $$\gamma_M=\left\{w\in\D:\frac{|1-w|}{1-|w|^2}=M\right\}.$$ As $w\rightarrow 1$ along $\gamma_M$, the Julia-Carath\'{e}odory Theorem guarantees us that $$\left|\frac{\overline{\Ph(w)}(1-w)}{1-|w|^2}\left(\frac{\eta-\psi(w)}{1-w}-\frac{\eta-\Ph(w)}{1-w}\right)\right|\rightarrow M|t-s|.$$ Thus for $N>0$, by choosing $M$ sufficiently large, we may find a sequence $\{w_n\}$ approaching 1 along $\gamma_M$ such that for $n$ large enough it follows that $$\left|\frac{(1-\overline{\Ph(w_n)}\psi(w_n))^2}{\overline{\Ph'(w_n)}\psi'(w_n)(1-|w_n|^2)^2}\right|>N.$$  Equivalently, for $0<\varepsilon<1$, we may find a sequence $\{w_n\}$ converging to 1 nontangentially such that $$2\textup{Re}\left(\frac{\overline{\Ph'(w_n)}\psi'(w_n)(1-|w_n|^2)^2}{(1-\overline{\Ph(w_n)}\psi(w_n))^2}\right)\leq\left|\frac{2\overline{\Ph'(w_n)}\psi'(w_n)(1-|w_n|^2)^2}
{(1-\overline{\Ph(w_n)}\psi(w_n))^2}\right|< \varepsilon$$ for $n$ sufficiently large.  Thus we see that $$\left\|(\CO-\CP)^*\left(\frac{K_w^{(1)}}{\|K_w^{(1)}\|}\right)\right\|\geq 1-\varepsilon$$ for all $\varepsilon$ with $0<\varepsilon<1$, and hence in this case the estimate from ~\eqref{eqn:adjointkernel} also holds.

To show that $\|\CO-\CP\|_e^2\geq 1$, recall that the normalized kernel functions $K_w^{(1)}/\|K_w^{(1)}\|$ converge weakly to zero as $|w|\rightarrow 1$ (see \cite{CMC} Proposition 7.13).  If we then consider any compact operator $Q$, it must then be the case that $$\left\|Q^*\left(\frac{K_w^{(1)}}{\|K_w^{(1)}\|}\right)\right\|\rightarrow 0$$ as $|w|\rightarrow 1$.  Combining this with the estimate $$\|\CO-\CP-Q\|\geq\left\|(\CO-\CP)^*\left(\frac{K_w^{(1)}}{\|K_w^{(1)}\|}\right)\right\|-\left\|Q^*\left(\frac{K_w^{(1)}}{\|K_w^{(1)}\|}\right)\right\|$$ it follows that $$\|\CO-\CP\|_e^2=\inf\{\|\CO-\CP-Q\|^2:Q \textup{ compact}\}\geq 1$$ completing the proof.\end{proof}

Our next result is a Dirichlet space analog of \cite{JMoore} Theorem 4. The statement there concerns weighted Dirichlet spaces $\DA$ with $\alpha>0$. Our statement takes a slightly different form in that the derivatives of $\Ph$ and $\psi$ appear; this is due to the fact that we are using the kernels for evaluation of the first derivative.

\begin{theorem}\label{thm:Dpseudo}
Let $\Ph$ and $\psi$ be analytic self-maps of $\D$ such that $\CO$ and $\CP$ are bounded on $\DI$ and define $$\rho(z)=\left|\frac{\Ph(z)-\psi(z)}{1-\overline{\Ph(z)}\psi(z)}\right|$$ for $z\in\D$.  If $\CO-\CP$ is compact on $\DI$, then $$\lim_{|z|\rightarrow1}\rho(z)\left(\frac{|\Ph'(z)|(1-|z|^2)}{1-|\Ph(z)|^2}+\frac{|\psi'(z)(1-|z|^2)}{1-|\psi(z)|^2}\right)=0.$$
\end{theorem}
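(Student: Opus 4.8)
The plan is to argue by contrapositive, following the same essential-norm strategy that drove Lemma~\ref{lem:Dfod}. Suppose the displayed limit fails. Then there is an $\varepsilon_0>0$ and a sequence $\{z_n\}$ with $|z_n|\to 1$ along which the quantity
$$
\rho(z_n)\left(\frac{|\Ph'(z_n)|(1-|z_n|^2)}{1-|\Ph(z_n)|^2}+\frac{|\psi'(z_n)|(1-|z_n|^2)}{1-|\psi(z_n)|^2}\right)\geq \varepsilon_0.
$$
I will produce from this a lower bound on $\|(\CO-\CP)^*(K_{z_n}^{(1)}/\|K_{z_n}^{(1)}\|)\|$ that is bounded away from zero. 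Since the normalized first-derivative kernels converge weakly to zero as $|z_n|\to1$ (\cite{CMC} Proposition 7.13), the same compact-perturbation argument used at the end of Lemma~\ref{lem:Dfod} then forces $\|\CO-\CP\|_e>0$, contradicting compactness.

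The computational core is to re-expand
$$
\left\|(\CO-\CP)^*\frac{K_{z}^{(1)}}{\|K_{z}^{(1)}\|}\right\|^2
=\frac{|\Ph'(z)|^2(1-|z|^2)^2}{(1-|\Ph(z)|^2)^2}+\frac{|\psi'(z)|^2(1-|z|^2)^2}{(1-|\psi(z)|^2)^2}
-2\,\textup{Re}\left(\frac{\overline{\Ph'(z)}\psi'(z)(1-|z|^2)^2}{(1-\overline{\Ph(z)}\psi(z))^2}\right),
$$
exactly as in the earlier lemma, but now reorganize the three terms to expose $\rho$. First I would abbreviate $A=\Ph'(z)(1-|z|^2)/(1-|\Ph(z)|^2)$ and $B=\psi'(z)(1-|z|^2)/(1-|\psi(z)|^2)$ (so $|A|,|B|$ are the pseudohyperbolic derivative factors appearing in the statement). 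The goal is a quantitative identity or inequality showing the squared norm above dominates a constant multiple of $\rho(z)^2(|A|+|B|)^2$, at least along the extremal directions. The natural mechanism is to complete the square: write the expression as $|A-B\,\omega(z)|^2$ plus a remainder, where $\omega(z)=(1-|\psi(z)|^2)(1-|\Ph(z)|^2)/(1-\overline{\Ph(z)}\psi(z))^2$ is the unimodular-defect factor, and then use the Schwarz--Pick-type identity
$$
1-\frac{(1-|\Ph(z)|^2)(1-|\psi(z)|^2)}{|1-\overline{\Ph(z)}\psi(z)|^2}=\rho(z)^2
$$
to convert the cross-term deficit into $\rho(z)^2$. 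The main obstacle, and the step I expect to require the most care, is controlling the argument (phase) of the cross-term: $\rho$ only bounds the modulus $|1-\overline{\Ph(z)}\psi(z)|$ from below relative to $|\Ph(z)-\psi(z)|$, so I must ensure the real part of the cross-term cannot conspire to cancel the positive diagonal contribution.

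To handle the phase I would split into the same two regimes as the reproducing-kernel lemma for the Hardy and Bergman cases. If along $\{z_n\}$ the quantity $\rho(z_n)$ stays bounded away from $1$ from below while both $|A|,|B|$ are comparable, then $1-\overline{\Ph(z_n)}\psi(z_n)$ is comparable in modulus to $\Ph(z_n)-\psi(z_n)$, and a direct estimate shows the cross-term is strictly smaller in modulus than the sum of the two diagonal terms, yielding a positive lower bound. If instead $\rho(z_n)\to 1$, then the cross-term tends to zero relative to the diagonal terms (the denominator $(1-\overline{\Ph}\psi)^2$ blows up relative to the numerator), and at least one of the two diagonal terms must be bounded below by the hypothesis, so the squared norm is again bounded away from zero. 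In either regime I obtain a uniform $\delta>0$ with
$$
\left\|(\CO-\CP)^*\frac{K_{z_n}^{(1)}}{\|K_{z_n}^{(1)}\|}\right\|\geq \delta,
$$
which, combined with the weak-null property of the kernels and the inequality $\|\CO-\CP-Q\|\ge\|(\CO-\CP)^*u_n\|-\|Q^*u_n\|$ for the normalized kernels $u_n$ and any compact $Q$, gives $\|\CO-\CP\|_e\ge\delta>0$. This contradicts the assumed compactness of $\CO-\CP$ and completes the contrapositive.
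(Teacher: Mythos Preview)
Your overall strategy is exactly the one the paper uses: argue by contrapositive, test $(\CO-\CP)^*$ on the normalized first-derivative kernels $K_{z_n}^{(1)}/\|K_{z_n}^{(1)}\|$, and use their weak-null property to force $\|\CO-\CP\|_e>0$. The expansion of $\|(\CO-\CP)^*K_w^{(1)}\|^2/\|K_w^{(1)}\|^2$ into two diagonal terms and a cross term is also identical to the paper's starting point.

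Where you diverge is in worrying about the \emph{phase} of the cross term. This is a red herring, and your proposed two-regime split (``$\rho$ bounded away from $1$'' versus ``$\rho\to1$'') is both more complicated than necessary and, as written, not exhaustive: you have not covered the case where $\rho$ stays away from $1$ but $|A|$ and $|B|$ are \emph{not} comparable, and in your first regime ``strictly smaller'' is not the same as ``bounded away by a fixed $\delta>0$'', which is what you actually need. The paper sidesteps the phase issue in one line by replacing $-2\,\textup{Re}\langle\cdot,\cdot\rangle$ with the larger quantity $-2\,|\langle\cdot,\cdot\rangle|$. Since on $\DI$ one has $\langle K_u^{(1)},K_v^{(1)}\rangle=(1-\overline{u}v)^{-2}$, the modulus of the normalized cross term is exactly
\[
(1-\rho^2(w))\,\frac{\|\CO^*K_w^{(1)}\|}{\|K_w^{(1)}\|}\cdot\frac{\|\CP^*K_w^{(1)}\|}{\|K_w^{(1)}\|},
\]
so with $|A|=\|\CO^*K_w^{(1)}\|/\|K_w^{(1)}\|$ and $|B|=\|\CP^*K_w^{(1)}\|/\|K_w^{(1)}\|$ one gets the clean lower bound
\[
|A|^2+|B|^2-2(1-\rho^2)|A||B|=(|A|-|B|)^2+2\rho^2|A||B|.
\]
No phase control is needed at all.

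The remaining work, which your sketch also glosses over, is to turn this into a \emph{uniform} lower bound along a subsequence. The paper passes to subsequences so that $a_n:=\rho(z_n)|A(z_n)|\to\A$, $b_n:=\rho(z_n)|B(z_n)|\to\beta$, and $\rho(z_n)\to p$, with (say) $\A\neq0$. A key point you do not mention is that boundedness of $\CO$ forces $|A(z_n)|\le\|\CO\|$, which rules out $p=0$. One then splits on $\A\neq\beta$ (the $(|A|-|B|)^2$ term survives) versus $\A=\beta\neq0$ (the $2\rho^2|A||B|$ term survives). That dichotomy is exhaustive and gives the desired $\delta>0$.
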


Before giving the proof, notice that the quantity $\rho(z)$ is simply the pseudo-hyperbolic distance between $\Ph(z)$ and $\psi(z)$ and thus we have the well known equality $$1-\rho^2(z)=\frac{(1-|\Ph(z)|^2)(1-|\psi(z)|^2)}{|1-\overline{\Ph(z)}\psi(z)|^2}.$$

\begin{proof}
We again argue by contrapositive and assume that \begin{equation}\label{eqn:Dpseudo}\lim_{|z|\rightarrow1}\rho(z)\left(\frac{|\Ph'(z)|(1-|z|^2)}{1-|\Ph(z)|^2}+\frac{|\psi'(z)(1-|z|^2)}{1-|\psi(z)|^2}\right)\neq 0.\end{equation} To show that $\CO-\CP$ is not compact, we will show that there is a sequence $\{z_n\}$ in $\D$ with $|z_n|\rightarrow1$ such that $$\left\|(\CO-\CP)^*\left(\frac{K_{z_n}^{(1)}}{\|K_{z_n}^{(1)}\|}\right)\right\|\not\rightarrow0.$$ Since $K_w^{(1)}/\|K_w^{(1)}\|\rightarrow 0$ weakly as $|w|\rightarrow 1$, we will conclude that $(\CO-\CP)^*$, and hence $\CO-\CP$, is not compact.

As in the proof of Lemma \ref{lem:Dfod}, for $w\in\D$ we have $$\left\|(\CO-\CP)^*\left(\frac{K_w^{(1)}}{\|K_w^{(1)}\|}\right)\right\|^2=\frac{\|\CO^*K_w^{(1)}-\CP^*K_w^{(1)}\|^2}{\|K_w^{(1)}\|^2}$$ which is greater than or equal to
\begin{equation}\label{eqn:Dpseudo1}\frac{\|\CO^*K_w^{(1)}\|^2}{\|K_w^{(1)}\|^2}+ \frac{\|\CP^*K_w^{(1)}\|^2}{\|K_w^{(1)}\|^2}-2\left|\left\langle\frac{\CO^*K_w^{(1)}}{\|K_w^{(1)}\|},\frac{\CP^*K_w^{(1)}}{\|K_w^{(1)}\|}\right\rangle\right|.\end{equation} Manipulating the third term here, $$\left|\left\langle\frac{\CO^*K_w^{(1)}}{\|K_w^{(1)}\|},\frac{\CP^*K_w^{(1)}}{\|K_w^{(1)}\|}\right\rangle\right|=\frac{|\Ph'(w)||\psi'(w)|(1-|w|^2)^2}
{|1-\overline{\Ph(w)}\psi(w)|^2}$$ which is equal to
$$\frac{(1-|\Ph(w)|^2)(1-|\psi(w)|^2)}{|1-\overline{\Ph(w)}\psi(w)|^2}\frac{|\Ph'(w)|(1-|w|^2)}{1-|\Ph(w)|^2}\frac{|\psi'(w)|(1-|w|^2)}{1-|\psi(w)|^2}$$ or, more simply, $$(1-\rho^2(w))\frac{\|\CO^*K_w^{(1)}\|}{\|K_w^{(1)}\|}\frac{\|\CP^*K_w^{(1)}\|}{\|K_w^{(1)}\|}.$$ Substituting into the expression in ~\eqref{eqn:Dpseudo1} and factoring we see that $$\frac{\|\CO^*K_w^{(1)}-\CP^*K_w^{(1)}\|^2}{\|K_w^{(1)}\|^2}\geq\left(\frac{\|\CO^*K_w^{(1)}\|-\|\CP^*K_w^{(1)}\|}{\|K_w^{(1)}\|}\right)^2+
2\rho^2(w)\frac{\|\CO^*K_w^{(1)}\|}{\|K_w^{(1)}\|}\frac{\|\CP^*K_w^{(1)}\|}{\|K_w^{(1)}\|}.$$

As we are assuming that the limit in Eqn. ~\eqref{eqn:Dpseudo} is not 0, it must be the case that there is a sequence $\{z_n\}$ in $\D$ with $|z_n|\rightarrow 1$ such that either $$\rho(z_n)\left(\frac{|\Ph'(z_n)|(1-|z_n|^2)}{1-|\Ph(z_n)|^2}\right)=\rho(z_n)\frac{\|\CO^*K_{z_n}^{(1)}\|}{\|K_{z_n}^{(1)}\|}:=a_n$$
or $$\rho(z_n)\left(\frac{|\psi'(z_n)|(1-|z_n|^2)}{1-|\psi(z_n)|^2}\right)=\rho(z_n)\frac{\|\CP^*K_{z_n}^{(1)}\|}{\|K_{z_n}^{(1)}\|}:=b_n$$ does not converge to 0. Since both the sequences $\{a_n\}$ and $\{b_n\}$ are bounded (by the boundedness of $\CO$ and $\CP$ together with the fact that $\rho(z)\leq 1$ for all $z\in\D$), we may, by passing to a subsequence if necessary, assume that $a_n\rightarrow \A$ and $b_n\rightarrow \beta$ with either $\A\neq 0$ or $\beta\neq 0$. By symmetry we may assume that $\A\neq 0$. By passing to a further subsequence if necessary, we may also assume that $\rho(z_n)\rightarrow p$.

First we note that $p>0$. Indeed, if $p=0$, then it must be the case that $$\lim_{n\rightarrow\infty}\frac{|\Ph'(z_n)|(1-|z_n|^2)}{1-|\Ph(z_n)|^2}=\lim_{n\rightarrow\infty}\frac{\|\CO^*K_{z_n}^{(1)}\|}{\|K_{z_n}^{(1)}\|}=\infty,$$ but this contradicts the fact that $\CO$ is bounded.  Thus we may (by passing to another subsequence if necessary) assume that $$\lim_{n\rightarrow\infty}\frac{|\Ph'(z_n)|(1-|z_n|^2)}{1-|\Ph(z_n)|^2}=\lim_{n\rightarrow\infty}\frac{\|\CO^*K_{z_n}^{(1)}\|}{\|K_{z_n}^{(1)}\|}=\frac{\A}{p}.$$  Similarly, the boundedness of $\CP$ implies that we may assume $$\lim_{n\rightarrow\infty}\frac{|\psi'(z_n)|(1-|z_n|^2)}{1-|\psi(z_n)|^2}=\lim_{n\rightarrow\infty}\frac{\|\CP^*K_{z_n}^{(1)}\|}{\|K_{z_n}^{(1)}\|}=\frac{\beta}{p}.$$

Now, if $\A\neq \beta$, then $$\lim_{n\rightarrow\infty}\left(\frac{\|\CO^*K_{z_n}^{(1)}\|-\|\CP^*K_{z_n}^{(1)}\|}{\|K_{z_n}^{(1)}\|}\right)^2=\left(\frac{\A}p-\frac{\beta}p\right)^2\neq 0.$$ On the other hand, if $\A=\beta\neq 0$, then $$\lim_{n\rightarrow\infty}2\rho^2(z_n)\frac{\|\CO^*K_{z_n}^{(1)}\|}{\|K_{z_n}^{(1)}\|}\frac{\|\CP^*K_{z_n}^{(1)}\|}{\|K_{z_n}^{(1)}\|}=2\A\beta\neq 0.$$ In either case, $$\left\|(\CO-\CP)^*\left(\frac{K_{z_n}^{(1)}}{\|K_{z_n}^{(1)}\|}\right)\right\|\not\rightarrow0$$ as desired.\end{proof}

In the case of $S^2$, the proofs are nearly identical to those just given for $\DI$ except for the fact that the kernel functions take a slightly different form in $S^2$; for a reference we point the reader to \cite{KH}.  Notice also that the hypotheses of Lemma \ref{lem:Sfod} are slightly altered.  This is due to the fact that on $S^2$, the boundedness of $\CO$ implies that $\Ph$ has finite angular derivative at any point $\zeta\in\partial\D$ with $|\Ph(\zeta)|=1$ (\cite{CMC} Theorem 4.13).

\begin{lemma}\label{lem:Sfod}
Let $\Ph$ and $\psi$ be analytic self-maps of $\D$ such that $\CO$ and $\CP$ are bounded on $S^2$. Further assume that there exists $\zeta\in\partial\D$ such that $|\Ph(\zeta)|= |\psi(\zeta)|=1$.  If $\CO-\CP$ is compact on $S^2$, then $\Ph$ and $\psi$ have the same first order data at $\zeta$.
\end{lemma}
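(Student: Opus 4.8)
The plan is to follow the proof of Lemma~\ref{lem:Dfod} almost verbatim, arguing by contrapositive: assuming $\Ph$ and $\psi$ fail to share first order data at $\zeta$, I would produce a positive lower bound for the essential norm of $\CO-\CP$ by testing its adjoint against the normalized derivative-evaluation kernels. As before, composing with rotations (which act unitarily on $S^2$) reduces matters to $\zeta=1$. The one change in the hypotheses is that I am now given only $|\Ph(1)|=|\psi(1)|=1$ rather than the existence of finite angular derivatives; but boundedness of $\CO$ and $\CP$ on $S^2$ together with \cite{CMC} Theorem~4.13 supplies finite angular derivatives $\Ph'(1)=s$ and $\psi'(1)=t$ at $1$, after which the Julia--Carath\'{e}odory Theorem is available exactly as in Lemma~\ref{lem:Dfod}. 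Writing $\eta=\Ph(1)$, I would record the $S^2$ data $\|K_w^{(1)}\|_{S^2}^2=1/(1-|w|^2)$ and, crucially, the closed form $\langle K_a^{(1)},K_b^{(1)}\rangle_{S^2}=1/(1-\overline{a}b)$, which I would verify from the reproducing property $\langle f,K_a^{(1)}\rangle=f'(a)$.

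Using $\CO^*K_w^{(1)}=\overline{\Ph'(w)}K_{\Ph(w)}^{(1)}$ and the analogous identity for $\CP$, I would expand
$$\left\|(\CO-\CP)^*\frac{K_w^{(1)}}{\|K_w^{(1)}\|}\right\|^2=\frac{|\Ph'(w)|^2(1-|w|^2)}{1-|\Ph(w)|^2}+\frac{|\psi'(w)|^2(1-|w|^2)}{1-|\psi(w)|^2}-2\,\textup{Re}\left(\frac{\overline{\Ph'(w)}\psi'(w)(1-|w|^2)}{1-\overline{\Ph(w)}\psi(w)}\right).$$
The only genuine difference from the Dirichlet computation is the lower power of $1-|w|^2$ coming from the $S^2$ kernel norm: Julia--Carath\'{e}odory now forces the first diagonal term to tend to $|s|$ and the second to $|t|$ along nontangential approaches to $1$, rather than to $1$. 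Since $|s|,|t|=d(1)>0$, this limit is still strictly positive, which is all the argument requires.

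I would then split into the two cases of Lemma~\ref{lem:Dfod}. If $\psi(1)\neq\eta$, a radial sequence along which $\psi(r_n)\not\to\eta$ forces $1-\overline{\Ph(r_n)}\psi(r_n)\not\to 0$, so the cross term vanishes and the lower bound tends to $|s|+|t|>0$. If instead $\psi(1)=\eta$ but $s\neq t$, I would run the nontangential-approach-region estimate on $\gamma_M$: decomposing $(1-\overline{\Ph(w)}\psi(w))/(1-|w|^2)$ exactly as in the proof of Lemma~\ref{lem:Dfod} shows its modulus grows like $M|t-s|$, so for $M$ large and $w\to 1$ along $\gamma_M$ the bounded numerator $|\Ph'(w)||\psi'(w)|$ makes the cross term smaller than any prescribed $\varepsilon$, leaving a lower bound of at least $|s|-\varepsilon>0$. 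In either case $\|(\CO-\CP)^*(K_{z_n}^{(1)}/\|K_{z_n}^{(1)}\|)\|$ stays bounded away from $0$ along a sequence with $|z_n|\to 1$; combined with the weak-null behavior of the normalized derivative kernels and the standard compact-operator estimate, this yields $\|\CO-\CP\|_e>0$, contradicting compactness.

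The genuine obstacle is not the case analysis, which transcribes directly, but rather ensuring the two analytic inputs that were free on $\DI$ survive on $S^2$, where no closed form for the point-evaluation kernel exists: first, that the \emph{derivative} kernels nonetheless admit the clean inner product $1/(1-\overline{a}b)$ that drives every estimate, and second, that $K_w^{(1)}/\|K_w^{(1)}\|\to 0$ weakly as $|w|\to 1$ in $S^2$, which I would take from \cite{KH}. Once those are secured, the remainder is a faithful adaptation of Lemma~\ref{lem:Dfod}, the sole conceptual adjustment being that the guaranteed essential-norm floor is $|\Ph'(1)|$ rather than $1$.
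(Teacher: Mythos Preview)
Your proposal is correct and matches the paper's intended argument: the authors explicitly state that the proof of Lemma~\ref{lem:Sfod} is ``nearly identical'' to that of Lemma~\ref{lem:Dfod}, differing only in the $S^2$ kernel formulas (for which they cite \cite{KH}), and your write-up carries out precisely that adaptation. Your observations that boundedness on $S^2$ supplies the finite angular derivatives via \cite{CMC} Theorem~4.13, that the cross term now carries $(1-|w|^2)/(1-\overline{\Ph(w)}\psi(w))$ to the first power, and that the resulting essential-norm floor is $|\Ph'(1)|$ rather than $1$ are exactly the points one must track, and you have handled them correctly.
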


The following theorem should be compared to the result of Theorem \ref{thm:Dpseudo} for the Dirichlet space and \cite{JMoore} Theorem 4. Here we see the square of the modulus of the derivative appearing.  Again, this difference in form is due to the use of the kernel for evaluation of the first derivative in $S^2$.

\begin{theorem}\label{thm:Spsuedo}
Let $\Ph$ and $\psi$ be analytic self-maps of $\D$ such that $\CO$ and $\CP$ are bounded on $S^2$.  If $\CO-\CP$ is compact on $S^2$, then $$\lim_{|z|\rightarrow1}\rho(z)\left(\frac{|\Ph'(z)|^2(1-|z|^2)}{1-|\Ph(z)|^2}+\frac{|\psi'(z)|^2(1-|z|^2)}{1-|\psi(z)|^2}\right)=0.$$
\end{theorem}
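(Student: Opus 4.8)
The plan is to follow the contrapositive argument of Theorem~\ref{thm:Dpseudo} almost verbatim, substituting the $S^2$ kernels for the Dirichlet ones and tracking the single structural change this substitution forces. Assuming the limit fails, I would begin from the same lower bound
$$\left\|(\CO-\CP)^*\left(\frac{K_w^{(1)}}{\|K_w^{(1)}\|}\right)\right\|^2\geq \frac{\|\CO^*K_w^{(1)}\|^2}{\|K_w^{(1)}\|^2}+\frac{\|\CP^*K_w^{(1)}\|^2}{\|K_w^{(1)}\|^2}-2\left|\left\langle\frac{\CO^*K_w^{(1)}}{\|K_w^{(1)}\|},\frac{\CP^*K_w^{(1)}}{\|K_w^{(1)}\|}\right\rangle\right|$$
and evaluate each piece using $\CO^*K_w^{(1)}=\overline{\Ph'(w)}K_{\Ph(w)}^{(1)}$ together with the $S^2$ data $\|K_w^{(1)}\|_{S^2}^2=1/(1-|w|^2)$ and $\langle K_a^{(1)},K_b^{(1)}\rangle_{S^2}=1/(1-\overline{a}b)$, the latter obtained by differentiating the $\log$-kernel and invoking the reproducing property. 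Writing $A=\|\CO^*K_w^{(1)}\|/\|K_w^{(1)}\|$ and $B=\|\CP^*K_w^{(1)}\|/\|K_w^{(1)}\|$, this gives $A^2=|\Ph'(w)|^2(1-|w|^2)/(1-|\Ph(w)|^2)$ and $B^2=|\psi'(w)|^2(1-|w|^2)/(1-|\psi(w)|^2)$, which are exactly the two summands in the statement.

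The one genuine departure from the Dirichlet proof occurs in the cross term. Because the $S^2$ derivative kernel contributes only a first power of $1-\overline{\Ph(w)}\psi(w)$ in the denominator rather than a square, the cross term factors as $\sqrt{1-\rho^2(w)}\,AB$ instead of $(1-\rho^2(w))AB$. Substituting this and completing the square yields
$$\left\|(\CO-\CP)^*\left(\frac{K_w^{(1)}}{\|K_w^{(1)}\|}\right)\right\|^2\geq (A-B)^2+2AB\left(1-\sqrt{1-\rho^2(w)}\right).$$
From the negation of the limit I would extract a sequence $\{z_n\}$ with $|z_n|\to 1$ along which $a_n:=\rho(z_n)A_n^2$ or $b_n:=\rho(z_n)B_n^2$ fails to converge to $0$; by symmetry assume $a_n\not\to 0$. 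Boundedness of $\CO$ and $\CP$ together with $\rho\le 1$ keep $\{a_n\}$ and $\{b_n\}$ bounded, so after passing to subsequences I may assume $a_n\to\A$, $b_n\to\beta$ with $\A\neq 0$, and $\rho(z_n)\to p$. Exactly as in Theorem~\ref{thm:Dpseudo}, $p>0$, since $p=0$ would force $A_n^2=a_n/\rho(z_n)\to\infty$, contradicting boundedness of $\CO$; hence $A_n^2\to\A/p$ and $B_n^2\to\beta/p$.

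Finally I would split into the same two cases. If $\A\neq\beta$, then $A_n\to\sqrt{\A/p}$ and $B_n\to\sqrt{\beta/p}$ differ (the square root being injective on $[0,\infty)$), so $(A_n-B_n)^2$ is bounded away from $0$. If $\A=\beta\neq 0$, then $2A_nB_n\left(1-\sqrt{1-\rho^2(z_n)}\right)\to\frac{2\A}{p}\left(1-\sqrt{1-p^2}\right)$. In either case the right-hand side stays bounded away from $0$, so $\|(\CO-\CP)^*(K_{z_n}^{(1)}/\|K_{z_n}^{(1)}\|)\|\not\to 0$; since these normalized kernels tend to $0$ weakly, $\CO-\CP$ cannot be compact. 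The main obstacle is therefore not conceptual but a matter of careful bookkeeping: propagating the square in $A_n^2,B_n^2$ forced by the derivative-kernel normalization, and verifying that the weaker geometric factor $1-\sqrt{1-p^2}$ is strictly positive precisely when $p>0$, which is the $S^2$ analog of the Dirichlet fact that $p^2>0$ when $p>0$.
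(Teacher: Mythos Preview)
Your proposal is correct and follows exactly the route the paper intends: the paper states that the $S^2$ proof is ``nearly identical'' to that of Theorem~\ref{thm:Dpseudo} except for the form of the derivative kernels, and you have carried out precisely that adaptation, correctly computing $\langle K_a^{(1)},K_b^{(1)}\rangle_{S^2}=(1-\overline{a}b)^{-1}$, tracking the resulting $\sqrt{1-\rho^2}$ in place of $1-\rho^2$ in the cross term, and adjusting the case analysis to the squared quantities $A_n^2,B_n^2$ that now match the summands in the statement.
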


We close with our main theorem and an interesting corollary for one particular class of maps.

\begin{theorem}\label{thm:DifferenceSOD}Let $\Ph$ and $\psi$ be analytic self-maps of $\D$. Let $F(\Ph)$ be the set of points $\zeta\in\partial\D$ with $|\Ph(\zeta)|=1$ and similarly for $F(\psi)$. Further suppose that $F(\Ph)=F(\psi):=F$.
\begin{enumerate}
\item[(a)] Suppose $\CO$ and $\CP$ are bounded on $S^2=\DI_{-1}$.  If $\Ph$ and $\psi$ have second order data at each point $\zeta\in F$ and $\CO-\CP$ is compact on $S^2$, then $\Ph$ and $\psi$ have the same second order data at each point $\zeta \in F$.

\item[(b)] Let $\gamma >-1$ and suppose $\CO$ and $\CP$ are bounded on $\DA$ for some $\A$ with $-1\leq\A<\gamma$. If $\Ph$ and $\psi$ have finite angular derivative and second order data at each point $\zeta\in F$ and $\CO-\CP$ is compact on $\DI_{\gamma}$, then $\Ph$ and $\psi$ have the same second order data at each point $\zeta \in F$.

\end{enumerate}
\end{theorem}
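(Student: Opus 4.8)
The plan is to prove both parts by contraposition after localizing at a single boundary point: rotations of $\D$ induce unitaries on every $\DA$, preserve compactness of $\CO-\CP$, and transform the relevant boundary quantities covariantly, so we may fix $\zeta\in F$ and assume $\zeta=1$, reducing the goal to $\Ph''(1)=\psi''(1)$. For part (a), Lemma \ref{lem:Sfod} applies because $\CO-\CP$ is compact on $S^2$ and $|\Ph(1)|=|\psi(1)|=1$, so $\Ph$ and $\psi$ share first order data at $1$: $\Ph(1)=\psi(1)=:\eta$ with $|\eta|=1$ and $\Ph'(1)=\psi'(1)$, whence $d:=|\Ph'(1)|>0$. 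Theorem \ref{thm:Spsuedo} then gives the vanishing, as $|z|\to1$, of the product of $\rho(z)$ with the sum of the two derivative terms appearing there.

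Suppose, toward a contradiction, that $\Ph''(1)\neq\psi''(1)$. Having second order data at $1$, both maps satisfy $\Ph(z)=\eta+\Ph'(1)(z-1)+\tfrac12\Ph''(1)(z-1)^2+o(|z-1|^2)$, and similarly for $\psi$, as $z\to1$ in $\D$. The decisive point is the approach geometry. Along any nontangential approach the denominator $1-\overline{\Ph(z)}\psi(z)$ is of order $1-|z|$ while $\Ph(z)-\psi(z)$ is of order $(1-|z|)^2$, so $\rho(z)\to0$ and no information survives. Instead I would approach $1$ along the parabolic curve $z=1-k^2+ik$, $k\to0^+$, which lies in $\D$, is tangent to $\partial\D$ at $1$, and satisfies $1-|z|^2\sim k^2$ and $(z-1)^2\sim-k^2$.

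On this curve the shared first order data cancels the linear terms, so $\Ph(z)-\psi(z)\sim\tfrac12(\Ph''(1)-\psi''(1))(z-1)^2$ is a nonzero multiple of $k^2$; matching second order expansions show that $1-\overline{\Ph(z)}\psi(z)$ and $1-|\Ph(z)|^2\sim(2d-d^2+\textup{Re}(\overline{\eta}\,\Ph''(1)))k^2$ are likewise nonzero multiples of $k^2$, the positivity of the latter coefficient being forced by boundedness of $\CO$ (which keeps $|\Ph'(z)|^2(1-|z|^2)/(1-|\Ph(z)|^2)$ below $\|\CO\|^2$). Taking ratios, $\rho(z)$ tends to a strictly positive limit, while $|\Ph'(z)|^2\to d^2$ sends $|\Ph'(z)|^2(1-|z|^2)/(1-|\Ph(z)|^2)$ to $d^2/(2d-d^2+\textup{Re}(\overline{\eta}\,\Ph''(1)))>0$. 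Hence the product in Theorem \ref{thm:Spsuedo} has a strictly positive limit along $z=1-k^2+ik$, contradicting that it must vanish, and therefore $\Ph''(1)=\psi''(1)$. I expect this tangential analysis to be the main obstacle: outside the nontangential regime the Julia--Carath\'{e}odory relations are unavailable, so every asymptotic must be read off from the second order Taylor data, and one must separately confirm---using boundedness rather than Julia--Carath\'{e}odory---that $1-|\Ph(z)|^2$ and $1-|\psi(z)|^2$ stay comparable to $1-|z|^2$ along the curve.

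For part (b) the role of interpolation is to import this argument to a space with a tractable kernel. Fixing $\beta>\gamma$ and taking the lower endpoint to be $\DA$ (or, when $\A=-1$, a space $\DI_{\A'}$ with $-1<\A'<\gamma$), boundedness on $\DA$ propagates to both endpoints, so $\CO-\CP$ lies in the interpolation algebra and Proposition \ref{propDAinterp} realizes $\DI_{\gamma}$ as an interior interpolation space. Cwikel's Theorem \ref{thm:Cwikelcompact} then promotes compactness on $\DI_{\gamma}$ to compactness on every interior $\DI_{\gamma'}$, and letting $\beta\to\infty$ yields compactness on $\DI_{\gamma'}$ for all $\gamma'>\A$. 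Choosing $\gamma'$ so that $\DI_{\gamma'}$ has a closed-form reproducing kernel---$\gamma'=0$, the Dirichlet space, when $\A<0$, or a large $\gamma'$, a standard weighted Bergman space, otherwise---the first order data lemma and the pseudo-distance theorem are available there exactly as in Lemma \ref{lem:Dfod} and Theorem \ref{thm:Dpseudo} (the exponent in the kernel norm only rescales the limiting constants), while the finite-angular-derivative hypothesis in (b) supplies what $|\Ph(\zeta)|=1$ furnished automatically on $S^2$. Running the tangential argument of part (a) on this space gives $\Ph''(\zeta)=\psi''(\zeta)$ at each $\zeta\in F$, completing the proof.
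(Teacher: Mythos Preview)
Your argument is correct and follows the same architecture as the paper: for (a) you invoke Lemma~\ref{lem:Sfod} to match first order data, then contradict Theorem~\ref{thm:Spsuedo} by producing a sequence along which $\rho$ and the Julia--Carath\'{e}odory quotient stay bounded away from zero; for (b) you use Cwikel's theorem to transport compactness to a space with a closed-form kernel and repeat the argument there. The paper does exactly this, with two efficiencies worth noting. First, instead of building the parabolic curve $z=1-k^2+ik$ and computing the second-order Taylor asymptotics by hand, the paper simply cites \cite{JMoore}~Proposition~1, which packages precisely that tangential construction: given equal first order data and $\Ph''(\zeta)\neq\psi''(\zeta)$, it supplies a sequence $z_n\to\zeta$ with $\rho(z_n)\not\to0$ and $(1-|z_n|^2)/(1-|\Ph(z_n)|^2)\not\to0$. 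Your explicit computation is essentially a reproof of that proposition, and your observation that boundedness of $\CO$ on $S^2$ (rather than Julia--Carath\'{e}odory) controls $(1-|\Ph(z)|^2)$ along the tangential curve is exactly the point. Second, for (b) the paper avoids your case split (Dirichlet vs.\ Bergman target) by always interpolating to a single $\DI_\delta$ with $\delta>1$ and then citing the already-proved weighted Bergman results \cite{CMC}~Theorem~9.16 and \cite{JMoore}~Theorem~6; your scheme works too but duplicates effort. In short: same proof, but the paper outsources the tangential-sequence construction and the Bergman endgame to the literature, whereas you redo both from scratch.
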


\begin{proof}
For (a), we know that $\Ph$ and $\psi$ have the same first order data at each $\zeta\in F$ by Lemma \ref{lem:Sfod}.  If $\Ph''(\zeta)\neq\psi''(\zeta)$, then by \cite{JMoore}  Proposition 1 there is a sequence $\{z_n\}$ in $\D$ with $z_n\rightarrow \zeta$, i.e. $|z_n|\rightarrow 1$,such that $\rho(z_n)\not\rightarrow 0$ and $$\frac{1-|z_n|^2}{1-|\Ph(z_n)|^2}\not\rightarrow 0.$$  Since $\Ph'(\zeta)\neq0$, this contradicts Theorem \ref{thm:Spsuedo} as we are assuming that $\CO-\CP$ is compact on $S^2$, and hence $\Ph''(\zeta)=\psi''(\zeta).$

For (b), it suffices to prove the result for $\gamma$ with $-1<\gamma\leq 1$; the other cases follow from  \cite{JMoore} Theorem 6 (we will actually appeal to this result to obtain our conclusion momentarily).  Choose $\beta$ and $\delta$ such that $-1\leq\A<\gamma\leq1<\delta<\beta$.  Then it must be the case that $\DI_{\gamma}$ and $\DI_{\delta}$ are interpolation spaces for the pair $[\DA,\DI_{\beta}]$.  Furthermore, by Theorem \ref{thm:Cwikelcompact}, we know that $\CO-\CP$ is compact on $\DI_{\delta}$. Appealing to \cite{CMC} Theorem 9.16 we see that $\Ph$ and $\psi$ must have the same first order data at each $\zeta\in F$; \cite{JMoore} Theorem 6 guarantees us that $\Ph$ and $\psi$ have the same second order data at each $\zeta\in F$.
\end{proof}

As in \cite{JMoore}, we can extend the previous result to a stronger result for the class of linear fractional symbols. We first have one final lemma which is also a known result on a variety of spaces; we include references where appropriate and a proof for the spaces not explicitly referenced.  Recall again that any nonconstant linear fractional self-map of $\D$ is univalent and induces a bounded composition operator on each weighted Dirichlet space.

\begin{lemma}\label{lem:linearfractional2}
Let $\gamma\geq -1$ and let $\Ph$ be a linear fractional self-map of $\D$. Then $\CO$ is compact on $\DI_{\gamma}$ if and only if $\|\Ph\|_{\infty}<1$.
\end{lemma}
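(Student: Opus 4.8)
The plan is to establish the two implications separately by a single argument valid across the whole scale $\gamma\ge-1$, recovering the classical statements on $H^2=\DI_1$, the weighted Bergman spaces ($\gamma>1$), and $S^2=\DI_{-1}$ (\cite{JS1}) as special cases. For the sufficiency, suppose $\|\Ph\|_\infty=r<1$. Since $\Ph$ is linear fractional with pole off $\overline{\D}$, it is analytic on a neighborhood of $\overline{\D}$ and $\Ph(\overline{\D})$ is a compact subset of $\{|w|\le r\}$. I would then argue by normal families: given a sequence in the unit ball of $\DI_{\gamma}$, extract a subsequence converging uniformly on compact sets to a limit $f$ in the ball, so that the differences $g_k$ and their derivatives $g_k'$ tend to $0$ uniformly on $\{|w|\le r\}$. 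For $\gamma>-1$ the norm is comparable to $|f(0)|^2+\int_{\D}|f'(z)|^2(1-|z|^2)^{\gamma}\,dA(z)$, and since $\Ph(z)$ always lies in $\{|w|\le r\}$, $\Ph'$ is bounded on $\overline{\D}$, and $(1-|z|^2)^{\gamma}$ is integrable, the image norms $\|\CO g_k\|_{\DI_{\gamma}}$ tend to $0$; hence $\CO$ maps the unit ball to a relatively compact set. The endpoint $\gamma=-1$ is precisely the $S^2$ statement cited from \cite{JS1}.

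For the converse I would argue by contrapositive: assuming $\|\Ph\|_\infty=1$, I show $\CO$ is not compact. By the maximum modulus principle the supremum is attained at some $\zeta\in\partial\D$, so $|\Ph(\zeta)|=1$; because a linear fractional self-map has its pole strictly outside $\overline{\D}$ (otherwise $|\Ph|$ would blow up along a sequence in $\D$), $\Ph$ is analytic at $\zeta$ and therefore has finite angular derivative there with $d:=|\Ph'(\zeta)|\in(0,\infty)$ by Theorem \ref{thm:JC}.

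The heart of the matter is a kernel norm estimate. Writing out the first derivative kernel in series form gives $\|K_w^{(1)}\|_{\DI_{\gamma}}^2=\sum_{n\ge1}n^{1+\gamma}|w|^{2(n-1)}$, which is comparable to $(1-|w|^2)^{-(2+\gamma)}$ as $|w|\to1$ for every $\gamma\ge-1$ (this specializes to the stated norms on $\DI$ and $S^2$). Using $\CO^{*}K_w^{(1)}=\overline{\Ph'(w)}\,K_{\Ph(w)}^{(1)}$ I would form the ratio
$$\frac{\|\CO^{*}K_w^{(1)}\|^2}{\|K_w^{(1)}\|^2}=|\Ph'(w)|^2\,\frac{\|K_{\Ph(w)}^{(1)}\|^2}{\|K_w^{(1)}\|^2},$$
and let $w\to\zeta$ nontangentially. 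Then $\Ph(w)\to\Ph(\zeta)$ with $|\Ph(w)|\to1$, and by Theorem \ref{thm:JC} one has $|\Ph'(w)|\to d$ and $(1-|w|^2)/(1-|\Ph(w)|^2)\to1/d$, so the ratio converges to $d^{2}\,(1/d)^{2+\gamma}=d^{-\gamma}>0$. Since the normalized kernels $K_w^{(1)}/\|K_w^{(1)}\|$ converge weakly to $0$ as $|w|\to1$ (for a polynomial $p$, $\langle p,K_w^{(1)}\rangle=p'(w)$ stays bounded while $\|K_w^{(1)}\|\to\infty$, and polynomials are dense with the normalized kernels of norm one), compactness of $\CO$, equivalently of $\CO^{*}$, would force this quantity to tend to $0$, a contradiction. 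Hence $\CO$ is not compact, completing the contrapositive.

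The step I expect to be most delicate is the kernel asymptotic: I must confirm $\sum_{n\ge1}n^{s}x^{n}\sim\Gamma(s+1)(1-x)^{-(s+1)}$ as $x\to1^{-}$ with $s=1+\gamma>-1$, and, crucially, that the \emph{same} universal constant $\Gamma(2+\gamma)$ appears in both numerator and denominator so that it cancels in the ratio, leaving exactly the power $\big((1-|w|^2)/(1-|\Ph(w)|^2)\big)^{2+\gamma}$ to which Theorem \ref{thm:JC} applies. The remaining work is bookkeeping: verifying the equivalent integral norm used in the sufficiency argument over the full range $\gamma>-1$ and isolating the endpoint $\gamma=-1$ through the citation to \cite{JS1}.
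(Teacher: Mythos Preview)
Your argument is correct, but it takes a genuinely different route from the paper's. The paper does not give a uniform proof across the scale: it cites \cite{JS1} for $-1\le\gamma<0$, cites \cite{JSComp} for $\gamma=1$ (with $\gamma>1$ handled analogously), and for $0\le\gamma<1$ it invokes Cwikel's interpolation theorem (Theorem~\ref{thm:Cwikelcompact}), placing $\DI_{\gamma}$ and $H^2$ as intermediate spaces for the pair $[S^2,\DI_{\beta}]$ and transferring compactness to and from $H^2$. Your proof, by contrast, is self-contained: sufficiency is handled directly by a normal-families argument with the equivalent integral norm, and necessity by computing the exact asymptotic $\|K_w^{(1)}\|_{\DI_{\gamma}}^2\sim\Gamma(2+\gamma)(1-|w|^2)^{-(2+\gamma)}$ and applying Julia--Carath\'eodory to the ratio $\|\CO^{*}K_w^{(1)}\|/\|K_w^{(1)}\|$. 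The paper's approach is shorter (given the citations) and emphasizes the interpolation theme of the article; yours avoids Cwikel entirely and works uniformly in $\gamma$, at the cost of needing the polylogarithm-type asymptotic $\sum_{n\ge1}n^{s}x^{n}\sim\Gamma(s+1)(1-x)^{-(s+1)}$, which you correctly flag as the step requiring justification (it holds for all $s>-1$, hence for $s=1+\gamma\ge0$).
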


\begin{proof}
If $-1\leq\gamma<0$, then the statement follows from \cite{JS1} Theorem 2.1 and the remarks following the theorem.  For $\gamma=1$, the Hardy space case, see \cite{JSComp} (pages 23, 29 - 31); the case for $\gamma>1$ follows similarly.

Finally, when $0\leq \gamma<1$, choose $\beta>1$. The spaces $\DI_{\gamma}$ and $\DI_1=H^2$ are interpolation spaces for the pair $[S^2,\DI_{\beta}]$; recall $S^2=\DI_{-1}$.  Now, if $\CO$ is compact on $\DI_{\gamma}$, it follows from Theorem \ref{thm:Cwikelcompact} that $\CO$ is compact on $H^2$. Then $\|\Ph\|_{\infty}<1$ since $\Ph$ is linear fractional. Conversely, suppose that $\|\Ph\|_{\infty}<1$.  Then $\CO$ is compact on $H^2$ and hence on $\DI_{\gamma}$ by Theorem \ref{thm:Cwikelcompact}.
\end{proof}

\begin{cor}\label{cor:linearfractional}
Let $\Ph$ and $\psi$ be linear fractional self-maps of $\D$ and let $\gamma\geq -1$.  Then $\CO-\CP$ is compact on $\DI_{\gamma}$  if and only $\Ph=\psi$ or both $\CO$ and $\CP$ are compact on $\DI_{\gamma}$.
\end{cor}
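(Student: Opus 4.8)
The plan is to prove the nontrivial forward implication by first forcing the two symbols to have the same boundary contact set, after which the second-order-data machinery of Theorem \ref{thm:DifferenceSOD} collapses everything to the stated dichotomy. The reverse implication is immediate and I would dispatch it first: if $\Ph=\psi$ then $\CO-\CP$ is the zero operator, hence compact, and if $\CO$ and $\CP$ are each compact then so is their difference. So assume throughout the forward direction that $\CO-\CP$ is compact on $\DI_{\gamma}$, and write $F(\Ph)$ and $F(\psi)$ for the respective sets of boundary contact points. Recall that a nonconstant linear fractional self-map is analytic in a neighborhood of $\overline{\D}$ (its pole cannot lie in $\overline{\D}$, else the self-map property fails), so at any point of $\partial\D$ it is smooth, and at any contact point it has finite nonzero angular derivative.

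The crucial first step is to show $F(\Ph)=F(\psi)=:F$. I would argue by contradiction, assuming (by symmetry) that there is a point $\zeta_0\in F(\Ph)\setminus F(\psi)$, so that $|\Ph(\zeta_0)|=1$ but $|\psi(\zeta_0)|<1$. To bring the pseudohyperbolic estimates to bear for an arbitrary $\gamma$, I would first transfer compactness to a convenient space: for $\gamma=-1$ keep $S^2$ and use Theorem \ref{thm:Spsuedo}; for $\gamma>-1$ invoke Cwikel's result (Theorem \ref{thm:Cwikelcompact}) with an endpoint pair $[\DA,\DI_{\beta}]$ chosen so that $-1\le\A<\min\{0,\gamma\}$ and $\beta>\max\{0,\gamma\}$, which makes both $\DI$ and $\DI_{\gamma}$ interior interpolation spaces, so that compactness of $\CO-\CP$ on $\DI_{\gamma}$ passes to $\DI$ and Theorem \ref{thm:Dpseudo} applies (this step is vacuous when $\gamma=0$). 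Feeding a nontangential sequence $z_n\to\zeta_0$ into the resulting estimate, the Julia--Carath\'eodory theorem gives $\rho(z_n)\to 1$ (the pseudohyperbolic distance between a boundary value and an interior value) and $|\Ph'(z_n)|(1-|z_n|^2)/(1-|\Ph(z_n)|^2)\to 1$, while the $\psi$-term tends to $0$ because $\psi(\zeta_0)$ is interior; thus the expression whose limit Theorem \ref{thm:Dpseudo} (resp. \ref{thm:Spsuedo}) forces to vanish instead tends to a positive value, the desired contradiction. Hence $F(\Ph)=F(\psi)$.

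With $F(\Ph)=F(\psi)=F$ in hand, two cases remain. If $F=\emptyset$, then $\|\Ph\|_{\infty}<1$ and $\|\psi\|_{\infty}<1$, so Lemma \ref{lem:linearfractional2} gives that $\CO$ and $\CP$ are both compact, which is the second alternative. If $F\neq\emptyset$, then at each $\zeta\in F$ both symbols touch the boundary with finite angular derivative and, being linear fractional, possess second-order data there; Theorem \ref{thm:DifferenceSOD} (part (a) when $\gamma=-1$, part (b) when $\gamma>-1$, the boundedness hypotheses being automatic for linear fractional symbols) then yields that $\Ph$ and $\psi$ share second-order data at each point of $F$. Applying Lemma \ref{lem:linearfractional1} at a single such point forces $\Ph=\psi$, the first alternative. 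This exhausts the cases and completes the proof.

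The step I expect to be the main obstacle is establishing $F(\Ph)=F(\psi)$. The difficulty is that the first-order-data results (Lemmas \ref{lem:Dfod} and \ref{lem:Sfod}) both presuppose that \emph{both} symbols have finite angular derivative at the point under consideration, which is exactly what fails when only one symbol touches the boundary at $\zeta_0$; this is why I route the argument through the pseudohyperbolic Theorems \ref{thm:Dpseudo} and \ref{thm:Spsuedo}, which require only boundedness of the two operators together with compactness of the difference. A secondary bookkeeping point, easily handled by Cwikel extrapolation as above, is ensuring that these $\DI$- and $S^2$-specific estimates can be deployed uniformly across the entire range $\gamma\ge -1$.
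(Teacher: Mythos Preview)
Your proof is correct and follows essentially the same overall arc as the paper's: reduce to the situation where both symbols touch the boundary, invoke Theorem~\ref{thm:DifferenceSOD} for equal second order data, and conclude $\Ph=\psi$ via Lemma~\ref{lem:linearfractional1}.

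The one substantive difference is how you secure the hypothesis $F(\Ph)=F(\psi)$ needed for Theorem~\ref{thm:DifferenceSOD}. The paper instead argues that if either $\CO$ or $\CP$ is compact then both are (since compact operators form a subspace), reduces to the case $\|\Ph\|_\infty=\|\psi\|_\infty=1$, and then for $-1<\gamma\le 1$ transfers compactness via Cwikel to some $\DI_\delta$ with $\delta>1$ and cites \cite{CMC} Theorem~9.16 and \cite{JMoore} to obtain equal first and second order data; for $\gamma>1$ it simply cites \cite{JMoore} Corollary~2. You instead establish $F(\Ph)=F(\psi)$ directly from the pseudohyperbolic necessary conditions (Theorems~\ref{thm:Dpseudo} and~\ref{thm:Spsuedo}), transferring to $\DI$ rather than to a Bergman-range space. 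This buys you a more self-contained argument that does not lean on the external references, and it makes explicit a point the paper treats somewhat tersely in the $\gamma=-1$ case (where Lemma~\ref{lem:Sfod} already presupposes a common contact point). Both routes are valid; yours is arguably cleaner bookkeeping, while the paper's avoids re-deriving what is already available in the Bergman-space literature.
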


\begin{proof}
If $\Ph=\psi$ or both $\CO$ and $\CP$ are compact on $\DI_\gamma$, then it is clear that $\CO-\CP$ is compact on $\DI_{\gamma}$.  Conversely, suppose that $\CO-\CP$ is compact on $\DI_{\gamma}$.  If either $\CO$ or $\CP$ is compact on $\DI_{\gamma}$, then both $\CO$ and $\CP$ are compact on $\DI_{\gamma}$ by the fact that the compact operators form a linear subspace within the set of all bounded operators on $\DI_{\gamma}$.  Thus we may further assume that neither $\CO$ nor $\CP$ is compact on $\DI_{\gamma}$.  It follows now by Lemma \ref{lem:linearfractional2} that $\|\Ph\|_{\infty}=1$ and $\|\psi\|_{\infty}=1$, i.e. $\Ph$ and $\psi$ have contact with the boundary of $\D$.

We now consider a few cases. For $\gamma>1$, see \cite{JMoore} Corollary 2.   Next assume that $\gamma=-1$.  Then Lemma \ref{lem:Sfod} implies that $\Ph$ and $\psi$ have the same first order data.  Applying Theorem \ref{thm:DifferenceSOD}(a) we see that $\Ph$ and $\psi$ must also have the same second order data and hence $\Ph=\psi$ by Lemma \ref{lem:linearfractional1}. A similar argument holds for $\gamma=0$ but we will use interpolation.

Next consider $-1<\gamma\leq 1$.  We again use an interpolation scheme and choose $\delta$ and $\beta$ with $-1<\gamma\leq1<\delta<\beta$.  Here we have $\DI_{\gamma}$ and $\DI_{\delta}$ as interpolation spaces for the pair $[S^2,\DI_{\beta}]$.  Since $\CO-\CP$ is compact on $\DI_{\gamma}$, it follows by Theorem \ref{thm:Cwikelcompact} that $\CO-\CP$ is compact on $\DI_{\delta}$ and thus $\Ph$ and $\psi$ have the same first order data by \cite{CMC} Theorem 9.16.  Theorem \ref{thm:DifferenceSOD}(b) then implies that $\Ph$ and $\psi$ have the same second order data and hence $\Ph=\psi$ by Lemma \ref{lem:linearfractional1}.
\end{proof}

\section*{Acknowledgments}
Part of this work is taken from the second author's Ph.D.~dissertation written at the University of Virginia under the direction of Professor Barbara D. MacCluer.

\bibliographystyle{amsplain}
\bibliography{references.bib}
\end{document}